\newtheorem{theorem}{\bf Theorem}[section]
\newtheorem{proposition}{\bf Proposition}[section]
\newcommand{\RR}{\mathbb{R}}
\newcommand{\CC}{\mathbb{C}}
\newcommand{\NN}{\mathbb{N}}
\newcommand{\A}{\mathbb{A}}
\newcommand{\B}{\mathbb{B}}
\newcommand{\D}{\mathbb{D}}
\newcommand{\I}{\mathbb{I}}
\newcommand{\K}{\mathbb{K}}
\newcommand{\Q}{\mathbb{Q}}
\newcommand{\LL}{\mathbb{L}}
\newcommand{\e}{\varepsilon}
 \def\mG{\mathsf{G}} 
 \def\mV{\mathsf{V}}
\def\mE{\mathsf{E}}
 \def\mv{\mathsf{v}}
 \def\me{\mathsf{e}}
 \newcommand{\sG}{\mathcal{G}}
 \newcommand{\sK}{\mathcal{K}} 
 \newcommand{\sL}{\mathcal{L}}
 \newcommand{\sP}{\mathcal{P}}
 \newcommand{\rL}{\mathrm{L}}
\newcommand{\rM}{\mathrm{M}}
\newcommand{\rC}{\mathrm{C}}
\newcommand{\rW}{\mathrm{W}}
\newcommand{\Tt}{(T(t))_{t\ge0}}
\newcommand{\Ttd}{(T'(t))_{t\ge0}}
 \newcommand{\LGp}{\rL^p(\mathcal{G})} 
 \newcommand{\p}{{\raisebox{1.3pt}{{$\scriptscriptstyle\bullet$}}}}
\newcommand{\LpneCm}{\mathchoice{\rL^p\bigl([0,1],\CC^m\bigr)}{\rL^p([0,1],\CC^m)}{\rL^p([0,1],\CC^m)}{\rL^p([0,1],\CC^m)}}
\newcommand{\WepneCm}{{\rW^{1,p}([0,1],\CC^m)}}
\newcommand{\WzpneCm}{{\rW^{2,p}([0,1],\CC^m)}}
\DeclareMathOperator{\diag}{diag}
\DeclareMathOperator{\fix}{fix}
\DeclareMathOperator{\rank}{rank}
\DeclareMathOperator{\ran}{ran}
\title{Semigroups for dynamical processes on metric graphs}
\author[M.~Kramar Fijav\v{z}]{Marjeta Kramar Fijav\v{z}}
\address{Marjeta Kramar Fijav\v{z}, University of Ljubljana, Faculty of Civil and Geodetic Engineering, Jamova 2, SI-1000 Ljubljana, Slovenia / Institute of Mathematics, Physics, and Mechanics, Jadranska 19, SI-1000 Ljubljana, Slovenia}
\email{marjeta.kramar@fgg.uni-lj.si}
\author[A.~Puchalska]{Aleksandra Puchalska}
\address{University of Warsaw, Faculty of Mathematics, Informatics and Mechanics, Institute of Applied Mathematics and Mechanics, Banacha 2, 02-097 Warsaw, Poland}
\email{apuchalska@mimuw.edu.pl}
\subjclass[2010]{47D06, 35R02, 35L50, 35K51}
\keywords{operator semigroups, networks, transport equation, diffusion equation, genetic mutation model, synaptic transmission model}
\thanks{The first author was partially supported by the Slovenian Research Agency, Grant No. P1-0222. The second author's research was supported by National Science Centre, Poland, 2017/25/N/ST1/00787. This article is based upon work from COST Action 18232, supported by COST (European Cooperation in Science and Technology). www.cost.eu.
}
\begin{document}
\begin{abstract}
We present the operator semigroups approach to first- and second-order dynamical systems taking place on metric graphs. We briefly survey the existing results and focus on the well-posedness of the problems with standard vertex conditions. Finally, we show two applications to biological models.
\end{abstract}

\maketitle

\section{Introduction}

Graphs or networks of various kinds are in 21st century omnipresent in everyday's life as well as in science. Graph theory, the field of discrete mathematics that deals with the combinatorial and topological  structure of networks, has experienced a boom in 1950s with the emergence of powerful computers. Since then, it has extremely developed and spread into many other fields, such as operational research, complex networks, or computer algorithms. At the same time,  chemists, physicists, biologists and engineers  started to use networks in modelling. 

In order to model certain dynamical processes along the edges of a graph with appropriate boundary or transmission conditions in the vertices some new mathematical tools from analysis were needed. The first results dealing with heat and wave equations on \emph{metric graphs} (also called \emph{networks} or \emph{1-D ramified spaces}) appeared in the mathematical literature around 1980. In particular, we mention the pioneering work by Lumer~\cite{Lum80}, Roth~\cite{Roth:84}, Ali Mehmeti~\cite{Ali:84}, Nicaise~\cite{Nic:85}, and von Below~\cite{Bel:85}.

In the next two decades, many authors used functional analytic methods to treat such problems, 
let us only list some works: \cite{GOO:93,LLS:94, Ali:94,Cat:97,CF:03,ABN:01,DZ:06,WaKa12}.
Simultaneously, another community was mainly interested in spectral problems associated to the second order - especially Schr\"odinger - equations on a network structure (calling it a \emph{quantum graph}), see e.g.~Exner \cite{Ex:90}, Kottos and Smilansky \cite{KS:97}, Kostrykin and Schrader \cite{KS:99}, Carlson \cite{Car00}, Kuchment \cite{Ku:02,Ku:08}, and Berkolaiko and Kuchment \cite{BK13}.

All the mentioned works are set in $\rL^2$-spaces, the considered operators are all self-adjoint, and the applied methods strongly rely on various Hilbert-spaces techniques. In some cases extrapolation is used  to generalise the results to $\rL^p$-spaces. However, another approach is needed to study problems in Banach spaces, for example to model flows in the $\rL^1$-setting which is appropriate for modelling density of particles. Here, operator semigroups techniques for evolution equations have been proven to be useful. The first model of this kind was proposed by Barletti \cite{Bar96}, but then ten years must have passed before the topic was rediscovered and gained considerable popularity.

The semigroup approach to linear transport equation on finite networks was initiated, independently of Barletti's work, in 2005 by the author and Sikolya  \cite{KS05} and further pursued by
Sikolya \cite{Sik05}, M\'atrai at al.~\cite{MS07},
Kunszenti--Kov\'acs \cite{Kun08a}, and Banasiak et al.~\cite{BN:14,Ban:16}. 
Following the same line, Radl \cite{Rad08}  considered the linear Boltzmann equation with scattering,  Engel et al.~\cite{EKNS08,EKKNS10, EKF:17} and Boulite et al.~\cite{BBEAM:13}  vertex control problems, Kl\"oss~\cite{Klo:12} wave equation, 
Bayazit et al.~\cite{BDR:12,BDK:13} delay and non autonomous transport problems, while
Dorn et al.~\cite{Dor08,DKS09}, Kunszenti--Kov\'acs~\cite{Kun08b}, Namayanja  \cite{Nam:18}, and Budde et al.~\cite{BKF:20} studied 
 transport problems in infinite networks. New insights into the relation between network structure and dynamics were given in \cite{BFN:16a, BF:15}. 
  Small parameter problems for diffusion on networks, initiated by Bobrowski \cite{Bob12}, came straight from a biological application and were further developed in \cite{BFN:16,  BKK:17, Greg:19}. The relation between diffusion on  the edges compared with its counterpart dynamics in the vertices became a motivation to transport analogues given by Banasiak et al.~\cite{BFN:16,Bob:16,BF:17,BP18}. 
 Finally, let us mention some survey publications written over the years: \cite{DFKNR:10,BFT:18, BP19}.
 
By combining semigroup and form methods in $\rL^2$-spaces, diffusion problems were considered in \cite{KMS:07,KKVW09,SV:11,ADK:14, SSJW:15},  hyperbolic problems in \cite{KraMugNic20}, and mixed problems in \cite{HM:13}.
For a thorough display of these methods we refer to the monograph by Mugnolo \cite{Mug:14}. 
Another type of semigroups approach is by applying the theory of port-Hamiltonian systems, see e.g.~\cite{JMZ:15,WauWeg19}.

Aiming to the general, Banach space techniques, we shall present here the perturbation methods from \cite{EKF:19,EKF:20} that allow the study of transport and diffusion processes with non-constant coefficients in general $\rL^p$-spaces. These methods are also suitable for non-compact graphs and yield results for various - also non-local - conditions. For the sake of simplicity, we consider here only compact graphs. In \autoref{sec:preliminaries} we introduce the setting and notations and present two simple generation results for 
first- and second-order problems on metric graphs. We apply these results in \autoref{sec:wp} to transport and  diffusion problems on graphs with so-called standard vertex conditions. For the transport case we also discuss some qualitative properties of the solutions. In  \autoref{sec:applications} we demonstrate 
the usage of the developed theory to the selected real life problems: studies of genetic mutations and synaptic transmissions. 
In this way the impact of the structure of the graph to the dynamic of the relevant biological process gets clear.

\section{Preliminaries}\label{sec:preliminaries}
\subsection{Metric graphs}\label{graphs}

Let $\mG=(\mV,\mE)$ be a simple,  undirected,  finite, connected graph with the set of vertices $\mV=\{\mv_1,\dots,\mv_n\}$ and the set of edges $\mE = \{\me_1,\dots,\me_m\}$.
The structure of  $\mG$ is defined by one of the graph matrices:
\begin{itemize} 
\item the $n\times n$ \emph{adjacency matrix} $\A=(a_{ij})$ giving a vertex to vertex relation, i.e., 
$a_{ij} \ne 0 \iff \mv_i$  and $\mv_j$ are connected by an edge,
\item the $m\times m$ \emph{adjacency matrix of the line graph} $\B=(b_{ij})$ giving an edge  to edge relation, i.e., 
$b_{ij} \ne 0 \iff \me_i$  and $\me_j$ share a common vertex, or
\item the $n\times m$ \emph{incidence matrix}  $\Phi=(\phi_{ij})$ giving a vertex to edge relation, i.e., 
$\varphi_{ij} \ne 0 \iff \mv_i$  is an endpoint of $\me_j$.
\end{itemize}
If the nonzero elements of a graph matrix all equal 1, we say that $\mG$ is an \emph{unweighted} graph, otherwise $\mG$ is \emph{weighted}.
For a vertex $\mv\in\mV$, we denote by $\Gamma(\mv)$ the set of all the  edges in $\mG$ incident to $\mv$ and by $d_\mv:= |\Gamma(\mv)|$ the  \emph{degree} of $\mv$.
We call $\D:=\diag(d_{\mv})$ the \emph{degree matrix} and $\LL:=\D-\A$ the \emph{Laplacian matrix}.

By associating to each edge $\me_k$ an interval, normalised as $[0,1]$ for simplicity, we obtain from the discrete object $\mG$  a metric object $\sG$ called a \emph{metric graph}, that is a collection of intervals with endpoints ``glued" to a network structure. By an abuse of notation we shall denote the vertices at the endpoints of the edge $\me$ by  $\me(0)$ and  $\me(1)$, respectively.
Further, when considering a function $f$ on the edge $\me\equiv [0,1]$,  we shall occasionally write $f(\mv):=f(s)$ if $\me(s)=\mv$ for $s=0$ or $s=1$.

We introduce an orientation of the graph $\sG$ contrary to the parametrisation of the edges as intervals. We thus denote by $\Phi^{-}:=(\phi^{-}_{ij})$ and   $\Phi^{+}:=(\phi^{+}_{ij})$
the $n\times m$ \emph{outgoing} and \emph{incoming} incidence matrix, respectively, defined as
\begin{equation}\label{eq:incident}
 \phi^{-}_{ij} := 
 \begin{cases}
  1, & \text{if } \me_j(1) = \mv_i,\\
  0, & \text{otherwise},
 \end{cases}
 \qquad 
\text{and}
\qquad 
 \phi^{+}_{ij} := 
 \begin{cases}
  1, & \text{if } \me_j(0) = \mv_i,\\
  0, & \text{otherwise}.
 \end{cases}
\end{equation}
If the $i$-th row of  $\Phi^-$ (resp.~$\Phi^+$)  is zero, we say that vertex  $\mv_i$  is a \emph{sink} (resp.~a \emph{source}) of $\sG$.

Let $b^w_{jk} \ge 0$, $1\le j,k\le m$, be some nonnegative \emph{weights}. We shall also use the $m\times m$ (transposed) weighted adjacency matrix of the line-graph $\B_w:=(b^w_{jk})$ defined as
\begin{equation}\label{eq:adj-line}
b^w_{jk} \ne 0 \iff \me_k(0) = \mv_i = \me_j(1)
\end{equation}
and the  $m\times m$ weighted outgoing degree matrix $\D_w^-=\diag(d_{k}^{w-})$  of the line-graph given by
\begin{equation}
d_k^{w-}=\sum_{j=1}^m b^w_{jk}.
\end{equation}
The edges $\me_{j_1},\dots,\me_{j_k}$ forming a cycle in $G$ are a \emph{directed cycle} in $\sG$ if 
\[b^w_{j_i j_{i-1}\ne 0} \,\text{ for } \,i=2,\dots,k\quad\text{and}\quad b^w_{j_1 j_{k}}\ne 0.\]
Finally, we introduce an oriented version of the Laplacian matrix, called the \emph{outgoing Kirchhoff matrix} (cf.~\cite[Def.~2.18]{Mug:14}). We will use it for the line graph, hence we define
\begin{equation}\label{eq:K_matrix}
\sK^-:=\D^-_w-\B_w^\top.
\end{equation}

\subsection{Semigroups, generators, and domain perturbations}

It is well-known that for a linear operator $A:D(A)\subset X\to X$ on a Banach space $X$ an abstract Cauchy problem of the form
\begin{equation}\label{eq:acp}
\begin{cases}
\dot x(t)= A x(t),&t\ge0,\\
x(0)=x_0,
\end{cases}
\end{equation}
is well-posed if and only if $A$ generates a strongly continuous semigroup on $X$,  for details see \cite[Sect.~II.6]{EN:00}.
Consider the Banach space of $\rL^p$-functions, $p\ge 1$, defined on the edges of the metric graph $\mathcal{G}$, 
\begin{equation*}
X=\LGp:=\LpneCm. 
\end{equation*}
Let us  further define
\begin{equation*}
\begin{aligned}
\rW^{1,p}(\sG)&:=\WepneCm,\\
\rW^{2,p}(\sG)&:=\WzpneCm, \\
\rC(\sG)&:=\rC([0,1],\CC^m).
\end{aligned}
\end{equation*}
We are going to study first- and second-order differential operators on $\LGp$ of the form
\begin{equation}\label{eq:generators}
A_1:=c(\p)\cdot \frac{d}{ds}\quad\text{and}\quad A_2 := a(\p)\cdot\frac{d^2}{ds^2},
\end{equation}
respectively. For the coefficients in \eqref{eq:generators} we 
 assume that  $c(\p),a(\p)\colon [0,1]\to M_m(\RR)$
 are bounded Lipschitz continuous matrix-valued functions 
such that 
$c(\p):=\diag({c_i(\p)})$ and $a(\p):=\diag({a_i(\p)})$
with strictly positive diagonal entries: 
\begin{equation}\label{eq:coef-2}
c_{i}(s), a_{i}(s)>0\quad\text{for all }s\in[0,1],\ i=1,\ldots,m.
\end{equation}
Note, that even more general non-diagonal coefficients were allowed in  \cite{EKF:19,EKF:20}.

The structure of the graph  $\mathcal{G}$ is encoded in the boundary conditions appearing in the domains,
\begin{equation}\label{eq:dom-general}
\begin{aligned}
D\left(A_1\right)&:=\bigl\{f\in\rW^{1,p}(\sG) \mid \Psi f=0\bigr\}\quad\text{and}\\
D\left(A_2\right)&:=\bigl\{f\in\rW^{2,p}(\sG) \mid \Psi_0 f=0,\;\Psi_1( f'+ B f)=0\bigr\}
\end{aligned}
\end{equation}
for some linear and bounded ``boundary functionals"  $\Psi,\Psi_0,\Psi_1\colon \rC(\sG)\to\CC^m$ and a ``boundary operator'' $B\colon \LGp\to\LGp$.
The generation results for operators $A_1$ and $A_2$  with domains as in  \eqref{eq:dom-general} were obtained in \cite{EKF:19,EKF:20} by applying the Staffans-Weiss-type of boundary perturbation of the domain developed in \cite{ABE:13} and \cite{HMR:15}. 
Boundary functionals $\Psi,\Psi_0,\Psi_1$ and coefficients $c(\p),a(\p)$ are used to define the so-called ``input-output maps'' ${\mathcal R}_{t_0} \in\sL(\rL^p([0,t_0],\CC^m))$, see \cite[Lem.~2.3]{EKF:20} and \cite[Lem.~2.2]{EKF:19}. Then, it is shown in \cite[Theorem 2.4]{EKF:20} and \cite[Theorem 2.3]{EKF:19}  that the invertibility of ${\mathcal R}_{t_0}$ guarantees that  $(A_1, D(A_1))$ and $(A_2, D(A_2))$ generate $C_0$-semigroups on $\LGp$, respectively. 
Here, we state the generation results just for the special case of boundary conditions  given in terms of  matrices.

\begin{proposition}[\cite{EKF:20}, Corollary 2.16]\label{cor-mat-flow}
Let $V_0, V_1\in\rM_{m}(\CC)$. If 
$\det(V_1)\ne 0$
then the operator 
\begin{equation}\label{eq:A1-Kir}
A_1 = c(\p)\cdot\frac{d}{ds},\quad
D\left(A_1\right)=\left\{f\in\rW^{1,p}(\sG) \mid
V_0f(0) = V_1f(1)\right\}
\end{equation}
generates a $C_0$-semigroup on $\LGp$. If moreover also $\det(V_0)\ne 0$ 
then  we obtain a $C_0$-group.
\end{proposition}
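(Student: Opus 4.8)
The plan is to realise $(A_1,D(A_1))$ as a special instance of the domain perturbation set-up behind \eqref{eq:dom-general} and then reduce everything to the invertibility criterion recalled above. Here the boundary functional is $\Psi\colon\rC(\sG)\to\CC^m$, $\Psi f:=V_0f(0)-V_1f(1)$, which is linear and bounded, so $D(A_1)=\{f\in\rW^{1,p}(\sG)\mid \Psi f=0\}$ is exactly of the form treated in \cite{EKF:20}. By the generation result recalled before the statement it therefore suffices to exhibit one $t_0>0$ for which the input--output map ${\mathcal R}_{t_0}\in\sL(\rL^p([0,t_0],\CC^m))$ associated with this $\Psi$ and with $c(\p)$ is boundedly invertible; the semigroup assertion then follows from \cite[Thm.~2.4]{EKF:20}. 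The whole argument thus hinges on the structure of ${\mathcal R}_{t_0}$ for this particular $\Psi$.

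To read off that structure I would use the flow generated by $A_1$ on a single edge. Since $c(\p)=\diag(c_i(\p))$ with $c_i>0$ Lipschitz, on the $i$-th edge the characteristics of $\dot u=c_i u'$ run from $s=1$ towards $s=0$ with finite transit time $\tau_i:=\int_0^1 c_i(s)^{-1}\,ds>0$. Feeding a boundary input $u\in\rL^p([0,t_0],\CC^m)$ through the inflow endpoints $\me(1)$, the trace at $\me(1)$ is the instantaneous value $u(t)$, while the trace at the outflow endpoints $\me(0)$ is obtained from $u$ by transport and is delayed by at least $\tau_i$ on the $i$-th edge. Consequently $\Psi$ applied to this flow takes the form ${\mathcal R}_{t_0}=M_{V_1}-K_{t_0}$ (up to the sign convention of \cite{EKF:20}), where $M_{V_1}$ is pointwise multiplication by $V_1$ on $\rL^p([0,t_0],\CC^m)$, coming from the term $V_1f(1)=V_1u(t)$, and $K_{t_0}$ is a causal (Volterra) operator built from $V_0$ and the transport, whose kernel is supported on delays $\ge\min_i\tau_i$. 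In particular, choosing $0<t_0\le\min_i\tau_i$ makes $K_{t_0}$ vanish, so that ${\mathcal R}_{t_0}$ reduces, up to sign, to $M_{V_1}$.

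Invertibility is then immediate from the hypothesis. The multiplication operator $M_{V_1}$ is invertible on $\rL^p([0,t_0],\CC^m)$ if and only if $\det(V_1)\ne0$, with inverse $M_{V_1^{-1}}$; for larger $t_0$ one writes ${\mathcal R}_{t_0}=M_{V_1}\bigl(I-M_{V_1^{-1}}K_{t_0}\bigr)$ and notes that the Volterra operator $M_{V_1^{-1}}K_{t_0}$ is quasinilpotent, so that $I-M_{V_1^{-1}}K_{t_0}$ is invertible by the Neumann series. Either way $\det(V_1)\ne0$ yields a boundedly invertible ${\mathcal R}_{t_0}$, and \cite[Thm.~2.4]{EKF:20} gives that $(A_1,D(A_1))$ generates a $C_0$-semigroup on $\LGp$.

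For the group statement I would exploit the symmetry of the edge parametrisation. The isometric isomorphism $(Jf)(s):=f(1-s)$ of $\LGp$ conjugates $-A_1$ into the transport operator $\tilde c(\p)\cdot\frac{d}{ds}$ with $\tilde c(s)=c(1-s)>0$, and transforms the boundary condition $V_0f(0)=V_1f(1)$ into $V_1g(0)=V_0g(1)$, i.e.\ into a condition of the same type with the roles of $V_0$ and $V_1$ interchanged. Applying the semigroup part just proved to this operator, whose relevant matrix is now $V_0$, shows that $-A_1$ generates a $C_0$-semigroup as soon as $\det(V_0)\ne0$; together with the forward semigroup this makes $(A_1,D(A_1))$ the generator of a $C_0$-group. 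The main obstacle in all of this is the second step: verifying that ${\mathcal R}_{t_0}$ really splits as an invertible pointwise multiplication by $V_1$ plus a strictly causal Volterra remainder. This requires carefully tracking the two boundary traces of the flow -- identifying $f(1)$ with the instantaneous input and $f(0)$ with its transported, time-delayed image -- and matching the resulting operator with the definition of ${\mathcal R}_{t_0}$ from \cite[Lem.~2.3]{EKF:20}; once this bookkeeping is in place, both the invertibility and the group argument are routine.
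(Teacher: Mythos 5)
Your proposal is correct and takes essentially the same route as the paper: the paper gives no proof of its own here but cites \cite[Cor.~2.16]{EKF:20}, whose method it sketches in \autoref{sec:preliminaries} --- the Staffans--Weiss-type boundary perturbation with the input--output maps $\mathcal{R}_{t_0}$ of \cite[Lem.~2.3]{EKF:20} and the invertibility criterion of \cite[Thm.~2.4]{EKF:20}. Your reduction of $\mathcal{R}_{t_0}$ to multiplication by $V_1$ for $t_0$ below the minimal transit time $\min_i\tau_i$ (with the Volterra/Neumann-series remark for larger $t_0$), and the reflection $f\mapsto f(1-\cdot)$ swapping $V_0$ and $V_1$ for the group statement, are exactly the intended verification of that criterion.
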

 
\begin{proposition}[\cite{EKF:19}, Corollary 2.11]\label{cor-mat-diff}
For $k_0,k_1\in\NN$ satisfying $k_0+k_1=2m$ let
\[V_0, V_1\in\rM_{k_0\times m}(\CC)\quad \text{and}\quad W_0, W_1\in\rM_{k_1\times m}(\CC).\]
Let $B\in\sL(\LGp,\CC^{k_1})$ and assume that $
B \bigl(\rW^{1,p}(\sG)\bigr) \subseteq\rW^{1,p}([0,1],\CC^{k_1}).
$
If the determinant
\begin{equation*}\label{eq:det}
\det
\begin{pmatrix}
V_1&V_0\\
W_1\cdot a(1)^{-1/2}&W_0\cdot a(0)^{-1/2}
\end{pmatrix}
\ne 0,
\end{equation*}
then the operator 
\begin{equation}\label{eq:A2-mat}
A_2= a(\p)\cdot\frac{d^2}{ds^2},\quad
D\left(A_2\right)=\left\{f\in\rW^{2,p}(\sG) \; \bigg|\; 
\begin{aligned}
&V_0f(0) + V_1f(1)= 0\\[-3pt]
&W_0f'(0) - W_1f'(1)  +(Bf)(0)=0
\end{aligned}
\right\}
\end{equation}
generates an analytic semigroup on $\LGp$  of angle $\frac{\pi}{2}$.
\end{proposition}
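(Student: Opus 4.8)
The plan is to deduce the statement from the general second-order generation theorem of \cite{EKF:19}, whose standing hypothesis is the bounded invertibility of the input--output map ${\mathcal R}_{t_0}\in\sL(\rL^p([0,t_0],\CC^m))$ attached to the boundary data. First I would recast the matrix conditions in \eqref{eq:A2-mat} into the abstract form \eqref{eq:dom-general}: set $\Psi_0 f := V_0 f(0)+V_1 f(1)$ and $\Psi_1 g := W_0 g(0)-W_1 g(1)$, so that the first row of \eqref{eq:A2-mat} is $\Psi_0 f=0$, while the second row is the condition $\Psi_1 f'=0$ corrected by the lower-order boundary term $(Bf)(0)$, which is incorporated through the boundary operator of the general scheme. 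Since $V_0,V_1,W_0,W_1$ are constant matrices and point evaluation is bounded on $\rC(\sG)$, the functionals $\Psi_0,\Psi_1$ are bounded as required, and the hypothesis $B(\rW^{1,p}(\sG))\subseteq\rW^{1,p}([0,1],\CC^{k_1})$ guarantees that $(Bf)(0)$ is well defined on the domain. With this identification the entire claim reduces to verifying that ${\mathcal R}_{t_0}$ is invertible.

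Next I would make the invertibility condition explicit. In the framework of \cite{EKF:19} the map ${\mathcal R}_{t_0}$ is assembled from the value and derivative boundary traces of the solutions of the principal resolvent equation $\lambda f - a(\p)f''=0$, so its invertibility is governed by the boundary behaviour of the fundamental system of that ODE. Because $a(\p)=\diag(a_i(\p))$ is diagonal with strictly positive, Lipschitz entries, a Liouville-type change of the spatial variable $s\mapsto\int_0^s a_i(\sigma)^{-1/2}\,d\sigma$ on each component transforms the principal part into the plain second-derivative operator $\tfrac{d^2}{dt^2}$ plus lower-order terms. Under this transformation the value traces $f(0),f(1)$ are unchanged, whereas the derivative traces acquire exactly the factors $a(0)^{-1/2}$ and $a(1)^{-1/2}$ through the chain rule $f'=a^{-1/2}\dot f$. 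Consequently the leading boundary matrix acting on $(f(1),f(0),f'(1),f'(0))$ is precisely
\begin{equation*}
\begin{pmatrix}
V_1 & V_0\\
W_1\cdot a(1)^{-1/2} & W_0\cdot a(0)^{-1/2}
\end{pmatrix},
\end{equation*}
and its nonvanishing determinant is exactly the nondegeneracy of the boundary conditions relative to the two-parameter solution family of the principal equation, which is what the construction in \cite{EKF:19} needs for ${\mathcal R}_{t_0}$ to be boundedly invertible.

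Finally I would check that the remaining ingredients do not spoil this principal computation. The non-constant part of $a(\p)$ and the boundary operator $B$ contribute only lower-order (at most first-order) terms; by the regularity assumption on $B$ and the relative boundedness of first-order operators with respect to $\tfrac{d^2}{dt^2}$, they perturb ${\mathcal R}_{t_0}$ by a term that is negligible for invertibility once the displayed determinant is nonzero. The analyticity of angle $\tfrac{\pi}{2}$ then follows from the transformed principal operator, a diagonal second-derivative operator on an interval, together with the fact that relatively bounded perturbations of relative bound zero preserve both generation and the sectoriality angle. \textbf{The main obstacle} I anticipate is the second step: tracking the $a^{-1/2}$ scaling through the construction of ${\mathcal R}_{t_0}$ and proving rigorously that invertibility of ${\mathcal R}_{t_0}$ is equivalent to the nonvanishing of that determinant, uniformly in the lower-order data. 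This is precisely the technical core isolated in the general theorem of \cite{EKF:19}, so that the corollary reduces to evaluating its hypothesis for constant boundary matrices.
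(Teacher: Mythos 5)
Your proposal is correct and takes essentially the same route as the paper: the paper offers no independent proof of this proposition but imports it from \cite[Cor.~2.11]{EKF:19}, and the derivation it sketches is exactly yours --- recast the matrix conditions as the boundary functionals $\Psi_0,\Psi_1$ of \eqref{eq:dom-general} and verify the hypothesis of the general theorem \cite[Thm.~2.3]{EKF:19}, namely invertibility of the input--output map $\mathcal{R}_{t_0}$, for which the displayed determinant (with the $a(0)^{-1/2}$, $a(1)^{-1/2}$ weights on the derivative traces, correctly explained by your Liouville-type rescaling) is the matrix-case criterion. One caveat: your closing appeal to Kato-type relatively bounded perturbations is not the mechanism that yields the angle $\frac{\pi}{2}$, since the terms in question (the operator $B$ and the vertex conditions) perturb the \emph{domain} rather than the differential expression; in \cite{EKF:19} the angle comes with the general theorem itself (via generation of a cosine family) once $\mathcal{R}_{t_0}$ is invertible, which your reduction invokes anyway, so this looseness does not affect the argument.
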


\section{Well-posedness and some structural properties}\label{sec:wp}

We present here two simple applications of \autoref{cor-mat-flow} and \autoref{cor-mat-diff} that yield the well-posedness of first- and second-order processes, respectively, on metric graphs with standard vertex conditions.

\subsection{Flows  with standard vertex conditions}

We start by considering a transport process along each edge $\me_j$ of the metric graph $\sG$  given by
\begin{equation}\label{eq:TE}
\frac{\partial}{\partial t}\, u_j(t,s) = c_j(s)\cdot \frac{\partial}{\partial s}\, u_j(t,s)
,\quad t >0,\  s\in(0,1),\quad j=1,\dots,m ,
\end{equation}
where $u_j$ represents the density of the transported material and  $c_j$ is the velocity function satisfying \eqref{eq:coef-2}. Since we have assumed that all  $c_j>0$ we consider the transport on $\me_j\equiv [0,1]$ from the vertex $\me_j(1)$ to the vertex $\me_j(0)$.

In the vertices, the material gets redistributed according to certain rules. A standard assumption is that the process complies with the \emph{Kirchhoff's law}, that is in every vertex, at any time,  the total incoming flow equals the total outgoing flow. Since the flow on each edge is always nonnegative, this means that we may without loss of the generality assume that $\sG$ has no sinks or sources (see also \cite[Thm.~2.1]{BN:14}) . 
The Kirchhoff condition can be written in terms of our incidence matrices as 
\begin{equation}\label{eq:kirch}
 \Phi^{-} c(1) u(t,1) = \Phi^{+}  c(0) u(t,0).
\end{equation}
For the well-posedness of the transport problem on $m$ compact intervals $m$ boundary conditions are needed. Kirchhoff's law \eqref{eq:kirch} gives us $n$ conditions, each row corresponding to the condition in one vertex. The graph with $m=n-1$ edges is a tree and the graph with $m=n$ is unicyclic. Since we assume no sources or sinks, the only graph where Kirchhoff's laws give sufficiently many boundary conditions is a cycle. In all the other cases $m>n$ and we need more conditions. 

A natural further assumption is to prescribe how the material gets redistributed in the vertices. 
Let $w_{ij}$ represent the proportion of the material  that is distributed from vertex $\mv_i$ into edge $\me_j$. We assume that 
\begin{equation}\label{eq:w}
0\le w_{ij}\le 1,\quad w_{ij} \ne 0 \iff \phi_{ij}^- \ne 0,\quad\text{and}\quad \sum_{j=1}^{m} w_{ij} = 1,
\end{equation}
for all $i=1,\dots,n$ and $j = 1,\dots,m$.
For every edge $\me_j$ such that $\mv_i = \me_{j}(1)$ we thus take
\begin{equation}\label{eq:dis}
    c_j(1) u_{j}(t,1)= w_{ij} \left[ \Phi^{+}  c(0) u(t,0)\right]_i.
\end{equation}
This yields the $m$ boundary conditions we need. Note, that \eqref{eq:w} guarantees the conservation of mass  in every vertex and that conditions \eqref{eq:dis} \& \eqref{eq:w} together imply Kirchhoff's law \eqref{eq:kirch}. 

\begin{proposition}\label{prop:flow-wp}
Let $\sG$ be a finite connected metric graph given by the incidence matrices \eqref{eq:incident} with no sinks nor sources. 
Then the  system
\begin{equation}\label{eq:F}
\left\{\begin{array}{rcll}
\frac{\partial}{\partial t}\, u_j(t,s) &=& c_j(s)\cdot \frac{\partial}{\partial s}\, u_j(t,s)
,&t >0,\  s\in(0,1),\\
\phi_{ij}^{-}c_j(1) u_{j}(t,1)&=&w_{ij}\sum_{k=1}^{m}\phi_{ik}^{+}c_k(0) u_{k}(t,0),& t >0,
\\
u_j(0,s)&=& f_j(s), &s\in\left(0,1\right),
\end{array}
\right.
 \end{equation}
where $j=1,\dots,m$, $i=1,\dots,n$, 
is well-posed on  $\LGp$. Its solution is given as 
\[u(t,x)= T(t)f (x)\] where $\Tt$ is a  $C_0$-semigroup on $\LGp$.
\end{proposition}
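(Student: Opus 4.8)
The plan is to recognise \eqref{eq:F} as the abstract Cauchy problem \eqref{eq:acp} governed by the first-order operator $A_1=c(\p)\cdot\frac{d}{ds}$ from \eqref{eq:generators}, and then to recast the vertex conditions in exactly the matrix form required by \autoref{cor-mat-flow}, so that the generation result applies directly. Accordingly I would first set $X=\LGp$, read $u(t):=u(t,\p)$ as an $X$-valued function, and rewrite the first line of \eqref{eq:F} as $\dot u(t)=A_1u(t)$ with $A_1$ acting edgewise and the third line as the initial condition $u(0)=f$. It then only remains to identify the middle line of \eqref{eq:F} with the boundary condition defining $D(A_1)$ in \eqref{eq:A1-Kir} for a suitable invertible $V_1$.

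The key simplification is that for each edge $\me_j$ the endpoint $\me_j(1)$ is a single vertex $\mv_{i(j)}$, so by \eqref{eq:incident} column $j$ of $\Phi^-$ has its unique nonzero entry at row $i(j)$, i.e. $\phi^-_{ij}\neq0$ precisely for $i=i(j)$. Since \eqref{eq:w} forces $w_{ij}=0$ whenever $\phi^-_{ij}=0$, every condition in the middle line of \eqref{eq:F} with $i\neq i(j)$ collapses to the trivial identity $0=0$. Hence there remain exactly $m$ nontrivial conditions, one per edge, namely
\[
c_j(1)\,u_j(t,1)=w_{i(j),j}\sum_{k=1}^m\phi^+_{i(j),k}\,c_k(0)\,u_k(t,0),\qquad j=1,\dots,m.
\]
Stacking these over $j$ and evaluating on the domain, I would write them as $V_0f(0)=V_1f(1)$ with
\[
V_1:=c(1)=\diag\bigl(c_1(1),\dots,c_m(1)\bigr),\qquad (V_0)_{jk}:=w_{i(j),j}\,\phi^+_{i(j),k}\,c_k(0),
\]
so that $V_0,V_1\in\rM_m(\CC)$ and the domain of $A_1$ is precisely that in \eqref{eq:A1-Kir}.

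Finally I would invoke \autoref{cor-mat-flow}, whose sole hypothesis is $\det(V_1)\neq0$. Here $V_1=c(1)$ is diagonal with entries $c_j(1)>0$ by the positivity assumption \eqref{eq:coef-2}, whence $\det(V_1)=\prod_{j=1}^m c_j(1)>0$. Therefore $(A_1,D(A_1))$ generates a $C_0$-semigroup $\Tt$ on $\LGp$, and by the well-posedness characterisation in \cite[Sect.~II.6]{EN:00} the Cauchy problem, and hence \eqref{eq:F}, is well-posed with solution $u(t,\p)=T(t)f$.

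The only genuine work is the bookkeeping that turns the doubly-indexed conditions into the pair $(V_0,V_1)$; I expect this to be the main (though routine) obstacle, since one must verify carefully that \eqref{eq:w} eliminates the superfluous rows so that exactly $m$ conditions survive. The essential structural point, and the reason positivity of the velocities is imposed, is that this reduction makes $V_1$ the invertible diagonal matrix $c(1)$, while $V_0$ (encoding the weights $w_{ij}$ and the incoming incidences $\Phi^+$) plays no role in the hypothesis of \autoref{cor-mat-flow}.
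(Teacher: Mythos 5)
Your proof is correct and takes essentially the same route as the paper: the paper likewise reduces the doubly-indexed vertex conditions to a single matrix equation, namely $u(t,1)=\B_c u(t,0)$ with $\B_c:=c(1)^{-1}\B_w c(0)$ (citing \cite[Prop.~18.2]{BKFR:17} for the bookkeeping you carry out by hand), and then applies \autoref{cor-mat-flow}. The only cosmetic difference is that you take $V_1=c(1)$ and $V_0=\B_w c(0)$ instead of normalising to $V_1=I$ and $V_0=\B_c$; since $\det c(1)>0$ by \eqref{eq:coef-2}, the two formulations define the same domain and both satisfy the hypothesis of \autoref{cor-mat-flow}.
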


\begin{proof}
Since there are no sinks, the boundary conditions in  \eqref{eq:F}  are equivalent to 
\begin{equation*}
u(t,1) =  \B_c u(t,0) \quad\text{where}\quad  \B_c:= c(1)^{-1} \B_w c(0)
\end{equation*}
and $\B_w$ is the adjacency matrix defined in \eqref{eq:adj-line} where we take $b^w_{jk} = w_{ij}$ and $\mv_i$ is the common vertex of the edges $\me_k$ and $\me_j$, see \cite[Prop.~18.2]{BKFR:17}. Now, letting
\begin{equation}\label{eq:flow-op}
A_1 := c(\p)\cdot\frac{d}{ds},\quad
D\left(A_1\right) :=\left\{f\in\rW^{1,p}(\sG)  \; \big|\; 
f(1)  =  \B_c f(0) \right\},
\end{equation}
 \autoref{cor-mat-flow} yields that the problem  \eqref{eq:F}  is well posed.
\end{proof}

Let us add some comments to the obtained result. 
Since $\B_w$ can be expressed via incidence matrices (see \cite[(18.3)]{BKFR:17}), it follows from our assumptions that $\rank\B_w =n$. Hence, the matrix $\B_c$ in \eqref{eq:flow-op}  is invertible if and only if $\sG$ is a directed cycle and, by \autoref{cor-mat-flow}, this is the only case when the solution semigroup $\Tt$  is actually a group.

Further, let us remark that we have actually proven a much more general result. The proof of \autoref{prop:flow-wp} gives us the generation property for the 
operator given in \eqref{eq:flow-op} for \emph{any matrix} $\B_c$, not necessarily related to the graph itself! One can thus reverse the question and ask, when is the problem with a general matrix \emph{graph realizable}, that is, when is given matrix $\B_c$ an  adjacency matrix of the line graph of $\sG$. This question was studied in \cite{BF:15}.
\medskip

Under the assumption on the weights  \eqref{eq:w}, the matrix  $\B_w$ is column stochastic. This turns out to be important when studying further qualitative properties of the solutions. 
 Many properties of the solution semigroup are given by the structure of the graph. For example, the semigroup $\Tt$ is irreducible if and only if the oriented metric graph $\sG$ is strongly connected (cf.~ \cite[Prop.~18.16]{BKFR:17} and  \cite[Lem.~4.5]{MS07}). 
To formulate another result of this kind we need some more notations. For every $ j=1, \dots, m$ we define 
\begin{equation}\label{eq::def-varphi}
\varphi_j(s):=\int_0^s\frac{dr}{c_j(r)}
\quad \text{ for }s\in[0,1].
\end{equation}
The following  condition plays a crucial role in the long-term behaviour of the solutions.
\begin{equation}\label{ldq}
\begin{aligned}
\text{There exists } &0<d\in\RR \text{ such that }d\cdot (\varphi_{j_1}(1) + \cdots + \varphi_{j_k}(1))\in\NN \\ 
&\text{for all directed cycles }e_{j_1},\dots,e_{j_k} \text{ in }\sG.
\end{aligned}
\end{equation}
We call a subgraph $\sG_r$ of $\sG$ a \emph{terminal strong component} if it is strongly connected and there are no outgoing edges of $\sG_r$, see  \cite[page 17]{BG:09}.

\begin{theorem} \label{thm:flow-asy} 
Let $\sG$ be a connected graph  with terminal strong components $\sG_{1},\dots,\sG_{{\ell}} $  and $\Tt$ a semigroup associated with the transport problem \eqref{eq:acp} -- \eqref{eq:flow-op}. Then  the space $\LGp$ and the semigroup $\Tt$ can be decomposed as
$$\LGp=X_{n} \oplus X_{s} \oplus X_{r_1} \oplus \cdots \oplus X_{r_{\ell}}\quad \text{and}\quad T(\p)=T_{n}(\p) \oplus T_{s}(\p) \oplus T_{r_1}(\p)\oplus \cdots \oplus T_{r_{\ell}}(\p)$$
such that  all the subspaces in the decomposition are $T(t)$-invariant  and the following holds.\vspace{-.3cm}
\begin{enumerate}
\item  $T_n(\p)$ is nilpotent on $X_{n}$.
\item  $T_s(\p)$ is strongly stable on $X_{s}$.
\item If for some $1\le i\le\ell$ the graph $\sG_{i}$  satisfies Condition \eqref{ldq}  then $T_{r_i}(\p)$ is a periodic irreducible group on $X_{r_i}$ with period 
\[ \tau_i = \frac{1}{d}\gcd\left\{ d\cdot (\varphi_{j_1}(1) + \cdots + \varphi_{j_k}(1)) \mid e_{j_1},\dots,e_{j_k}  \text{ is a directed cycle in }\sG_i\right\}.\]
 \item If for some $1\le i\le\ell$ graph $\sG_{i}$ does not satisfy Condition \eqref{ldq}   then $T_{r_i}(\p)$ converges strongly
towards a projection onto the one-dimensional subspace $X_{r_i}$.
\end{enumerate}
\end{theorem}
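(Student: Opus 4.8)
The plan is to reduce the long-term analysis of $\Tt$ to the combinatorial and Perron--Frobenius structure of the column-stochastic matrix $\B_w$ from \eqref{eq:w}, using that on a compact graph the embedding $\rW^{1,p}(\sG)\hookrightarrow\LGp$ is compact, so the generator $A_1$ of \eqref{eq:flow-op} has compact resolvent and its spectrum is a discrete set of eigenvalues. First I would reparametrise each edge by $s\mapsto\varphi_j(s)$ with $\varphi_j$ as in \eqref{eq::def-varphi}; this intertwines $A_1$ with a unit-speed transport operator and turns the traversal times $\varphi_j(1)$ into the edge lengths, so that the eigenvalue problem $A_1 f=\lambda f$ becomes the characteristic equation $\det\!\bigl(\B_w\,\diag(e^{-\lambda\varphi_j(1)})-I\bigr)=0$ (up to the diagonal rescaling induced by the speeds). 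I would then fix an ordering of the edges compatible with the condensation of $\sG$ — the acyclic graph of its strong components — placing the edges of the terminal components $\sG_1,\dots,\sG_\ell$ last. Since a terminal strong component has no outgoing edges, $\B_w$ is block triangular, the blocks $(\B_w)_{r_i}$ attached to the $\sG_i$ are irreducible column-stochastic matrices of spectral radius $1$, and each subspace $X_{r_i}:=\rL^p(\sG_i)$ is $T(t)$-invariant with $\Tt|_{X_{r_i}}$ the flow semigroup on the strongly connected graph $\sG_i$, which is irreducible by the criterion recalled before the theorem.

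Next I would dispose of the transient dynamics, which sits in the complementary diagonal blocks. By the block-triangular factorisation of the characteristic determinant, $\sigma(A_1)$ is the union of the contributions of the individual blocks. A block coming from an acyclic part of $\sG$ is nilpotent, so its factor of the determinant never vanishes and it contributes \emph{no} eigenvalue; the corresponding material is absorbed into the terminal components within the finite time given by the longest transient path, which yields the nilpotent invariant summand $T_n(\p)$ on $X_n$. A block $M$ coming from a non-terminal strong component has spectral radius $r<1$, and the estimate $\rho\bigl(M\,\diag(e^{-\lambda\varphi_j(1)})\bigr)\le r<1$ for $\operatorname{Re}\lambda\ge0$ forces its eigenvalues into a half-plane $\{\operatorname{Re}\lambda\le-\delta<0\}$, producing the exponentially — hence strongly — stable summand $T_s(\p)$ on $X_s$. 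Assembling these into an honest direct sum of $T(t)$-invariant subspaces complementing $\bigoplus_i X_{r_i}$ is a secondary technical point: I would use the finite-time absorption for the nilpotent part and the spectral gap for the stable part to build the complementary invariant subspaces.

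For the terminal blocks the decisive object is the peripheral point spectrum. On each $\sG_i$ the restricted semigroup is bounded (the column-stochasticity of $(\B_w)_{r_i}$ gives contractivity on $\rL^1$ and $\rL^\infty$, hence boundedness on $\LGp$ by interpolation), positive and irreducible, and Perron--Frobenius furnishes a strictly positive fixed function from the eigenvalue $1$ of $(\B_w)_{r_i}$. If $\sG_i$ fails Condition \eqref{ldq}, I would show that $0$ is the only spectral value on $i\RR$ and invoke the convergence theorem for bounded irreducible positive semigroups with boundary spectrum $\{0\}$ to deduce strong convergence of $T_{r_i}(\p)$ to the rank-one projection onto the one-dimensional fixed space; this is assertion~(4).

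The computation of the period in assertion~(3) is the step I expect to be the main obstacle. For $\lambda=i\beta$ the matrix $\diag(e^{-i\beta\varphi_j(1)})(\B_w)_{r_i}$ has the same modulus as the irreducible stochastic block, so by the Wielandt characterisation of the peripheral spectrum of a nonnegative irreducible matrix, $1$ is an eigenvalue precisely when the phases close up around every directed cycle, i.e.\ when $\beta\bigl(\varphi_{j_1}(1)+\cdots+\varphi_{j_k}(1)\bigr)\in2\pi\ZZ$ simultaneously for all directed cycles $\me_{j_1},\dots,\me_{j_k}$ of $\sG_i$. Condition \eqref{ldq} is exactly what makes the set of admissible $\beta$ a nontrivial discrete subgroup $\tfrac{2\pi}{\tau_i}\ZZ$ of $\RR$, with $\tau_i$ the stated $\gcd$ of the cycle traversal times; the delicate part is the number-theoretic bookkeeping that extracts this $\gcd$ and the verification that each peripheral eigenvalue is algebraically simple. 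From $\sigma\bigl(A_1|_{X_{r_i}}\bigr)\cap i\RR=\tfrac{2\pi i}{\tau_i}\ZZ$, together with compactness of the resolvent, I would conclude by the spectral characterisation of periodic groups that $T_{r_i}(\p)$ is periodic with period $\tau_i$ — and, being periodic, automatically a $C_0$-group — which completes assertion~(3).
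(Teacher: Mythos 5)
Your overall strategy is the same one the paper's proof relies on -- the paper gives no self-contained argument but cites precisely the steps you reconstruct: the spectral decomposition following the block-triangular form of the line-graph adjacency matrix along the condensation of $\sG$ (\cite{KS05,BN:14}), the Perron--Frobenius analysis of the terminal blocks (\cite{BKFR:17}), and the reduction of variable speeds to the edge lengths $\varphi_j(1)$ (\cite{MS07}, your reparametrisation). But your execution has a genuine error at the decisive step (3). You set $X_{r_i}:=\rL^p(\sG_i)$ (which is indeed invariant) and deduce periodicity of $T_{r_i}(\p)$ from $\sigma\bigl(A_1|_{X_{r_i}}\bigr)\cap i\RR=\frac{2\pi i}{\tau_i}\ZZ$ together with compactness of the resolvent. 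Both the identification and the inference are wrong. Counterexample: let $\sG_i$ consist of two directed $3$-cycles sharing a single vertex $\mv$, all speeds $\equiv 1$, weights $\tfrac12,\tfrac12$ on the two edges leaving $\mv$. Condition \eqref{ldq} holds with $\tau_i=3$, the resolvent is compact, and the characteristic equation places the spectrum on $\frac{2\pi i}{3}\ZZ$; yet if $f$ equals $g$ on one edge entering $\mv$, $-g$ on the other, and zero elsewhere, the two incoming contributions cancel at $\mv$, so $T(1)f=0$ and the restriction to $\rL^p(\sG_i)$ is not even injective -- certainly not a periodic group. What fails in the abstract inference is totality of the peripheral eigenvectors, which every spectral characterisation of periodicity requires; here they span only a proper subspace. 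The correct choice, made in the proofs the paper cites, is to take $X_{r_i}$ to be the closed linear span of the unimodular eigenvectors (the reversible part of the Jacobs--de~Leeuw--Glicksberg decomposition of the bounded positive irreducible restriction to $\rL^p(\sG_i)$); on that space eigenvector totality holds by construction, and your Wielandt phase argument and gcd bookkeeping then correctly yield the period $\tau_i$. The complement of $X_{r_i}$ inside $\rL^p(\sG_i)$ is absorbed into $X_s$ (in the example above even into the nilpotent part). Note also that your identification contradicts assertion (4) as stated: there $X_{r_i}$ is the \emph{one-dimensional} span of the Perron eigenfunction, not all of $\rL^p(\sG_i)$.

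Two smaller soft spots. Boundedness of the restricted semigroup via ``contractive on $\rL^1$ and $\rL^\infty$, then interpolate'' does not work: column-stochasticity of $\B_w$ gives the $\rL^1$ bound, but its row sums can exceed $1$ (two edges feeding one outgoing edge add their boundary values), so $\rL^\infty$-contractivity fails in general; boundedness must come from elsewhere, e.g.\ a diagonal similarity built from the Perron eigenvector of the terminal block. And the construction of the invariant complements $X_n$ and $X_s$ is more than the ``secondary technical point'' you declare it to be: $\rL^p$ of the transient edges is \emph{not} invariant (mass flows into the terminal components), and the nilpotent part carries empty spectrum, so no spectral projection isolates it from the stable part; one genuinely needs the device of pushing forward by $T(t_0)$ past the absorption time and pulling back by the group on the recurrent part, which is what \cite{BN:14} and the explicit projection formulas of \cite{Ban:16} implement.
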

\begin{proof}
For simplicity, first assume that all coefficients $c_j(\p)\equiv c_j$ are constant. The decomposition of the space $\LGp$ is obtained as the spectral decomposition for the semigroup generator which corresponds to the decomposition of the adjacency matrix of the (line) graph according to the graph structure, as explained in the proofs of \cite[Thm.~4.10]{KS05} and  \cite[Thm.~5.1 \& Thm.~5.2]{BN:14}. The behaviour of the semigroups $T_{r_i}(\p)$ corresponding to the terminal strong components  $\sG_i$ is further described in \cite[Thm.~18.19]{BKFR:17}. Finally, considerations for arbitrary coefficients  can be found in \cite[Thm.~4.14 \& Thm.~4.22]{MS07}. 
\end{proof}

\subsection{Diffusion with standard vertex conditions}

Let us now consider the diffusion process along the edges of the metric graph $\sG$ given by
\begin{equation}\label{eq:net-dif}
\frac{\partial}{\partial t}\, u_j(t,s) = a_j(s)\cdot \frac{\partial^2}{\partial s^2}\, u_j(t,s),\quad t >0,\  s\in(0,1),\quad j=1,\dots,m,
\end{equation}
for some variable diffusion coefficients $a_j$ satisfying \eqref{eq:coef-2}. Having  the heat equation in mind, $u_j$ represents the temperature distribution along the edge $\me_j$ and it is reasonable to assume that $u$ is a continuous function on the graph, that is
\begin{equation}\label{eq:cont}
u_j(t,\mv) = u_k(t,\mv)\quad\text{whenever} \quad \me_j,\me_k\in\Gamma(\mv), \text{ for all }\mv\in\mV.
\end{equation}
This continuity condition can be expressed with matrices in the following way. 
For each vertex $\mv$ with degree $d_{\mv} >1$ define the $(d_{\mv} -1)\times d_{\mv}$ matrix
\begin{equation}\label{eq:Iv}
I_{\mv}:=\begin{pmatrix}
1 & -1 &&\\
&\ddots&\ddots&\\
&&1&-1
\end{pmatrix}.
\end{equation}
If the set of edges incident to $\mv$ equals $\Gamma(\mv)=\{\me_{j_1}, \dots, \me_{j_{d_{\mv}}}\}$ and $f(\mv):=(f_{j_1}(\mv), \dots,f_{j_{d_{\mv}}}(\mv))$  is the vector of the values of a function $f\in\rC(\sG)$ at the corresponding endpoints, then the equation
\begin{equation}\label{eq:cont-v}
I_{\mv}  f(\mv) = 0
\end{equation} 
yields the continuity of $f$ in $\mv$. Assuming continuity in all the vertices thus yields together $\sum_{\mv\in\mV} (d_{\mv} -1) = 2m -n$ boundary conditions. In order to obtain a well-posed diffusion problem on $m$ compact intervals (i.e., edges of the graph) we additionally need  $n$ boundary conditions. 

The next standard assumption is to impose in every vertex the Kirchhoff conditions for the heat fluxes. 
Again, this conditions can be written in terms of  incidence matrices as 
\begin{equation}\label{eq:eq:kirch-D}
\Phi^{-}a(1) \frac{\partial}{\partial s}\, u(t,1) =  \Phi^{+}a(0) \frac{\partial}{\partial s}\, u(t,0) 
\end{equation}
yielding the missing $n$ boundary conditions. 

\begin{proposition}\label{prop:diff-wp}
Let $\sG$ be a finite connected metric graph characterized by the incidence matrices \eqref{eq:incident}. 
Then the  system
\begin{equation}\label{eq:D}
\left\{\begin{array}{rcll}
\frac{\partial}{\partial t}\, u_j(t,s) &=& a_j(s)\cdot \frac{\partial^2}{\partial s^2}\, u_j(t,s),& t >0,\  s\in(0,1),\\
u_j(t,\mv_i) &=&u_k(t,\mv_i),&t > 0,\ \me_j,\me_k\in\Gamma(\mv_i),\\
 \sum_{k=1}^{m}\phi_{ik}^{-}a_k(1) \frac{\partial}{\partial s}\, u_{k}(t,1)&=& \sum_{k=1}^{m}\phi_{ik}^{+}a_k(0)  \frac{\partial}{\partial s}\, u_{k}(t,0),& t >0,\\
u_j(0,s)&=& f_j(s), & s\in\left(0,1\right),
\end{array}
\right.
 \end{equation}
 where $j=1,\dots,m$, $i=1,\dots,n$, 
is well-posed on  $\LGp$. 
\end{proposition}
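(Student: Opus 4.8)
The plan is to recast \eqref{eq:D} as the abstract Cauchy problem \eqref{eq:acp} for the operator $A_2 = a(\p)\cdot\frac{d^2}{ds^2}$ and to read off well-posedness from \autoref{cor-mat-diff}. First I would translate the two families of vertex conditions into boundary matrices. The continuity conditions \eqref{eq:cont}, written vertex by vertex as $I_\mv f(\mv)=0$ via the matrices \eqref{eq:Iv}, only constrain endpoint \emph{values}; assembling them yields a relation of the form $V_0 f(0)+V_1 f(1)=0$ with $V_0,V_1\in\rM_{k_0\times m}(\CC)$, where $k_0=\sum_{\mv\in\mV}(d_\mv-1)=2m-n$. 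The Kirchhoff flux conditions \eqref{eq:eq:kirch-D} only involve \emph{derivatives} and read $\Phi^{+}a(0)f'(0)-\Phi^{-}a(1)f'(1)=0$ in matrix form; comparing with the second boundary condition in \eqref{eq:A2-mat} gives $W_0=\Phi^{+}a(0)$, $W_1=\Phi^{-}a(1)$ (so $k_1=n$) and $B=0$, which trivially satisfies the hypotheses on $B$ in \autoref{cor-mat-diff}. Since $k_0+k_1=2m$, the structural assumptions hold and the only thing left is the determinant condition.

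Using $W_0 a(0)^{-1/2}=\Phi^{+}a(0)^{1/2}$ and $W_1 a(1)^{-1/2}=\Phi^{-}a(1)^{1/2}$, I must show that the $2m\times 2m$ matrix
\begin{equation*}
\begin{pmatrix}
V_1 & V_0\\
\Phi^{-}a(1)^{1/2} & \Phi^{+}a(0)^{1/2}
\end{pmatrix}
\end{equation*}
is invertible, which I would do by showing its kernel is trivial. Let $(x,y)\in\CC^m\times\CC^m$ be a kernel vector, with $x$ playing the role of $f(1)$ and $y$ that of $f(0)$. The top block $V_1 x+V_0 y=0$ is precisely the statement that the endpoint values are constant at each vertex, so there is $\gamma\in\CC^n$ with $x=(\Phi^{-})^{\top}\gamma$ and $y=(\Phi^{+})^{\top}\gamma$ (the $j$-th entry of $x$, resp.~$y$, being the common value $\gamma_i$ at $\me_j(1)$, resp.~$\me_j(0)$). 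Multiplying the bottom block on the left by the conjugate transpose $\gamma^{*}$ and substituting gives
\begin{equation*}
0=\gamma^{*}\bigl(\Phi^{-}a(1)^{1/2}(\Phi^{-})^{\top}+\Phi^{+}a(0)^{1/2}(\Phi^{+})^{\top}\bigr)\gamma=\bigl\|a(1)^{1/4}(\Phi^{-})^{\top}\gamma\bigr\|^2+\bigl\|a(0)^{1/4}(\Phi^{+})^{\top}\gamma\bigr\|^2,
\end{equation*}
where I used the strict positivity of $a(0),a(1)$ from \eqref{eq:coef-2}. Hence both $(\Phi^{-})^{\top}\gamma=0$ and $(\Phi^{+})^{\top}\gamma=0$.

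The concluding step is where connectedness of $\sG$ enters: a connected graph has no isolated vertex, so every $\mv_i$ is the endpoint $\me_j(0)$ or $\me_j(1)$ of some edge $\me_j$, and the corresponding entry of $(\Phi^{+})^{\top}\gamma$ or $(\Phi^{-})^{\top}\gamma$ equals $\gamma_i$; therefore $\gamma_i=0$ for all $i$, i.e.\ $\gamma=0$, whence $x=y=0$ and the kernel is trivial. The determinant is thus nonzero, so \autoref{cor-mat-diff} yields that $(A_2,D(A_2))$ generates an analytic semigroup on $\LGp$, and by \cite[Sect.~II.6]{EN:00} the problem \eqref{eq:D} is well-posed. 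I expect the determinant verification to be the one genuine obstacle; the decisive insight is that the transformed Kirchhoff form splits into two nonnegative squares, so that positivity of the diffusion coefficients together with connectedness of $\sG$ are exactly the ingredients forcing $\gamma=0$.
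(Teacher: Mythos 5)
Your proof is correct, and its first half is exactly the paper's reduction: the same $k_0=2m-n$, $k_1=n$, the same matrices $V_0,V_1$ assembled from the blocks $I_{\mv}$ of \eqref{eq:Iv}, the same $W_0=\Phi^{+}a(0)$, $W_1=\Phi^{-}a(1)$ with $B=0$, and the same determinant condition from \autoref{cor-mat-diff}. Where you genuinely diverge is in verifying that determinant. The paper argues combinatorially: permuting rows and columns of $M$ produces a block-diagonal matrix whose blocks correspond to ``vertex clusters'' --- each block $M_{\mv}$ is $I_{\mv}$ completed by the row $\bigl(\sqrt{a_{j_1}(\mv)},\dots,\sqrt{a_{j_{d_\mv}}(\mv)}\bigr)$ --- and then computes $\det M_{\mv}=\sqrt{a_{j_1}(\mv)}+\cdots+\sqrt{a_{j_{d_\mv}}(\mv)}>0$ explicitly. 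You instead show injectivity: parametrizing the kernel of the continuity block by vertex values, $x=(\Phi^{-})^{\top}\gamma$, $y=(\Phi^{+})^{\top}\gamma$ with $\gamma\in\CC^n$, and pairing the flux block with $\gamma^{*}$ so that it splits into the nonnegative squares $\bigl\|a(1)^{1/4}(\Phi^{-})^{\top}\gamma\bigr\|^{2}+\bigl\|a(0)^{1/4}(\Phi^{+})^{\top}\gamma\bigr\|^{2}$; this identity is legitimate since $\Phi^{\pm}$ are real and $a(0),a(1)$ are positive diagonal by \eqref{eq:coef-2}, and the conclusion $\gamma=0$ follows because no vertex is isolated. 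Each approach has its merits: the paper's vertex-cluster computation is fully local and yields the explicit value $\det M=\pm\prod_{\mv\in\mV}\bigl(\sqrt{a_{j_1}(\mv)}+\cdots+\sqrt{a_{j_{d_\mv}}(\mv)}\bigr)$, while your quadratic-form argument avoids all permutation bookkeeping, is the discrete shadow of the form-method positivity argument for Kirchhoff Laplacians, and isolates exactly which hypotheses matter --- strict positivity of $a$ and $d_\mv\geq 1$ for every vertex (so, as in the paper's proof, full connectedness of $\sG$ is stronger than what is actually used). Both arguments are visibly insensitive to the operator $B$, consistent with the remark following the paper's proof about $\delta$-type conditions.
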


\begin{proof} We will apply \autoref{cor-mat-diff}. To this end we need to write the boundary conditions in terms of some suitable boundary matrices. Let $k_0:= 2m-n$ and $k_1:=n$. We define the matrices $V_0,V_1\in M_{k_0\times m} (\CC)$ as a composition of $n$ blocks. The $i$-th block is $(d_{\mv_i}-1)\times m$ matrix associated with vertex $\mv_i$ and obtained from matrix $I_{\mv_i}$, defined in \eqref{eq:Iv}, as follows.
Each row of $I_{\mv_i}$ has exactly two nonzero entries, 1 and $-1$,  corresponding to a pair of edges $\me_j,\me_k \in \Gamma(\mv_i)$, respectively.  Now rearrange these two entries in the corresponding row of the matrices $V_0,V_1$ taking into consideration the parametrization of edges $\me_j,\me_k$. Namely, in the case $\mv_i = \me_j(0)$ (resp.~$\mv_i=\me_j(1)$) put 1 in the $j$-th column of $V_0$ (resp.~$V_1$) while in the case $\mv_i = \me_k(0)$ (resp.~$\mv_i=\me_k(1)$) put $-1$ in the $k$-th column of $V_0$ (resp.~$V_1$). Now observe that by \eqref{eq:cont-v},
\[V_0 u(t,0) + V_1 u(t,1)=0\]
which yields the continuity condition \eqref{eq:cont} in all the vertices of the graph.

Taking $W_0:= \Phi^{+}a(0)$ and $W_1:=\Phi^{-}a(1) $ the Kirchhoff conditions  \eqref{eq:eq:kirch-D} are expressed by
\[W_0 \frac{\partial}{\partial s}\, u(t,0)  - W_1 \frac{\partial}{\partial s}\, u(t,1)  = 0.\]
Thus, we can rewrite the problem \eqref{eq:D} as an abstract Cauchy problem for the operator $(A,D(A))$ defined in \eqref{eq:A2-mat} and the well-posedness is obtained once we verify that
\begin{equation*}
\det M:=\det
\begin{pmatrix}
V_1&V_0\\
\Phi^{-} a(1)^{1/2}& \Phi^{+} a(0)^{1/2}
\end{pmatrix}
\ne 0.
\end{equation*}
Observe that each column of $M$ corresponds  to exactly one endpoint of an edge. Moreover,  by permuting rows and columns of $M$ we can obtain the block diagonal matrix consisting of $n$ blocks of size $d_{\mv_i}\times d_{\mv_i}$  where each block corresponds to a `'vertex cluster'' - that is one vertex and appropriate endpoints of  its incident edges. We denote by  $M_{\mv}$ the block corresponding to vertex $\mv$. If $d_{\mv} =1$,   $M_{\mv}=\sqrt{a_{j}(\mv)}$ for $\me_j\in\Gamma(\mv)$, otherwise it consists of the matrix $I_{\mv}$ which we complement with the part of appropriately permuted  row of the matrix $\begin{pmatrix}\Phi^{-} a(1)^{1/2} & \Phi^{+} a(0)^{1/2}\end{pmatrix}$ corresponding to the relevant endpoints of the edges $\Gamma(\mv)=\{\me_{j_1}, \dots, \me_{j_{d_{\mv}}}\}$. This way we obtain 
\[M_{\mv} =  \begin{pmatrix}
1 & -1 &&\\
&\ddots&\ddots&\\
&&1&-1\\
\sqrt{a_{j_1}(\mv)}&\dots&\dots &\sqrt{a_{j_{d_\mv}}(\mv)}
\end{pmatrix} \text{ with }\det M_{\mv}  = \sqrt{a_{j_1}(\mv)}+\cdots + \sqrt{a_{j_{d_\mv}}(\mv)}\ne 0. \]
\end{proof}

Since the determinant condition in \autoref{cor-mat-diff} does not depend on the operator $B$, in the same way as above we obtain  the well-posedness of the diffusion problem with the so-called $\delta$-type conditions, see \cite[Sec.~3.3]{EKF:19}.

\section{Graph structure impact on dynamics}\label{sec:applications}

In this section we present two biological models chosen in the way that the first one fits to the network transport theory with standard vertex conditions whereas the second one is modelled with diffusion on the graph with generalised boundary conditions. Semigroup considerations allow us to characterise the dynamical properties of systems including also the relation between asymptotic behaviour and  the graph structure. 

\subsection{A genetic mutation model}\label{Kirchhoff}
Following \cite{BFN:16}, consider a population of cells divided into $m$ compartments according to  their genetic code. We describe the evolution of this population by taking two features into consideration: the normalised age $x\in[0,1]$ of the cell and the specified genetic characteristics $j\in\{1,\dots,m\}$. By $u_j(x,t)$ we denote the density of cells of type $j$ at age $x$ at time $t$. Assume additionally, that this characteristics can be different for a daughter and its mother-cell. Standard cell differentiation in mitosis is described by the matrix $\K=(k_{ij})_{i,j=1}^m$ and rare errors, causing a mutation of the genotype, are denoted by $\Q=(q_{ij})_{i,j=1}^m$. By $k_{ij},q_{ij}\geq 0$ we understand the fraction of mother cells with genetic feature $j$, having daughter cells of type $i$. We describe the general pattern of the proliferation of the genetic characteristic using the model \eqref{eq:acp} -- \eqref{eq:flow-op} in $\rL^1(\mathcal{G},\RR^m)$ with operator $\B_w=\K+\Q$ and $c\equiv 1$. In the whole \autoref{sec:applications} we assume  $X=\rL^1(\mathcal{G}) = \rL^1(\mathcal{G},\RR^m)$ is a real Banach space.

Note that $d_{ij},k_{ij}$ are -- as fractions of cell mass -- nonnegative. By \autoref{prop:flow-wp}, the problem is well-posed and, by \cite[Thm.~3.1]{BFN:16a}, also biologically meaningful since it attains a positive solution for positive initial data. Conservation of mass during reproduction indicates that \eqref{eq:w} holds and therefore $1\in \sigma(\B_w)$.

We assume that any type $i$ of genetic code can be attained which shall entail a connectedness, but not strong connectedness, of the graph $\sG$. Since condition \eqref{ldq} is satisfied for $c\equiv 1$, \autoref{thm:flow-asy} shows that the edges of the graph $\sG$ can be divided into two disjoint groups: the terminal strong components $\sG_t=\bigcup_{i=1}^{\ell}\sG_i$ and the acyclic part $\sG_a=\sG \setminus \sG_t$. The part $\sG_a$ consists of the edges on which the flow vanishes after some time and therefore is strictly related with the number of sources in $\sG$ and with the multiplicity of $0$ in $\sigma(\B_w)$. This part of the network can be interpreted as mutations that occurred in the past but due to the evolution process are not  observed nowadays. The subgraphs $\sG_t$ are related with the eigenvalue $1\in\sigma(\B_w)$ and its multiplicity indicates the number of strongly connected subgraphs in the limit, see \cite[page~17]{BG:09}. Note that if the matrix $\B_w$ is imprimitive then the limit behaviour of the system is periodic with period
$$\tau=\text{lcm}\left\{\tau_i\mid i=1,\dots,\ell\right\}$$
which means that we should observe time fluctuations in the number of cells having specified genotype. For a primitive matrix $\B_w$, the number of cells should stabilise at a certain level even though all the terminal strong components of $\sG$ consist of cycles. For more details of this considerations we refer to the explicit formulae of projection onto the eigenspace of $\B_w$ associated with eigenvalue $1$ computed in \cite[Thm.~3.1]{Ban:16}.

Finally, it is worth mentioning that the long term dynamic acts on the space of notably smaller dimension than $m$, namely, on the eigenspace associated with the Perron eigenvector of $\B_w$. It does not mean however that there are smaller number of mutations involved. 
\medskip

The system \eqref{eq:acp} -- \eqref{eq:flow-op} is considerably rich of information. As a model with both age- and gene- structure it consists of two time scales. Age characterises a cell lifetime which is significantly shorter than the time in which we can observe evolutionary gene mutations. In order to reduce the complexity of the system one can neglect the age-structure but then it is necessary to reflect on how the mutations observed in micro-scale influence the macro-description. For $\e>0$ consider a family of Cauchy problems \eqref{eq:acp} -- \eqref{eq:A1e-Kir}, with 
\begin{equation}\label{eq:A1e-Kir}
A^{\e}:= \frac{1}{\e}\frac{d}{ds},\quad
D(A^{\e}):=\left\{f\in\rW^{1,1}(\sG) \mid \left(\K + \e \Q\right) f(0) = f(1)\right\}.
\end{equation}
This evolution process describes the fast ageing with little number of mutations during mitosis compared to the total number of offsprings. 

Define now two mappings $\Pi_1, \mathcal{P}:\rL^1(\mathcal{G}) \rightarrow \rL^1(\mathcal{G})$,  
\begin{equation}\label{eq:proj}
\Pi_1 u:= \left(e_l\cdot u\right)e_r\quad\text{and}\quad \sP u:= \int_0^1u(x)dx,\quad u\in \rL^1(\mathcal{G}),
\end{equation}
where $e_l$ and $e_r$ are the left and right  eigenvector of $\K$ associated with $\lambda=1$ and normalised so that $e_l \cdot e_r=1$. Let  $\I$ denotes the $m\times m$ identity matrix. Note that $\Pi_1|_{\RR^m}$ is the spectral projection onto the eigenspace $\ker(\I-\K)$ along $\ran(\I-\K)$ while $\sP$ is a projection onto the finite dimensional subspace $\RR^m\subset \rL^1(\mathcal{G})$. Here and in the following, $\RR^m$ is considered either as linear space $(\RR^m,\left\|\cdot\right\|_{\ell_1})$ or as a linear subspace of $(\rL^1(\mathcal{G}),\left\|\cdot\right\|_{\rL^1(\mathcal{G})})$,  that is the subspace of the edge-wise constant functions on the graph, which does not cause an ambiguity.

In the following results we shall use the facts that $\K$ is contractive and $\lambda=1$ is its semisimple eigenvalue, see \cite[Eq.~(17),~Rem.~1]{BP18}. For the considered biological model they are naturally satisfied.

\begin{theorem}\label{flow-Kir-conv}\cite[Cor.~1\&3, Thm.~4.1]{BP18}
For any $\e>0$, let $u_\e(t) = T_\e(t)x_0$ for $x_0 \in \rL^1(\mathcal{G})$ be a solution of \eqref{eq:acp} -- \eqref{eq:A1e-Kir}. If $u(t)=T(t)u(0)$ is a matrix semigroup solution in $\RR^m$ of the problem
\begin{equation}\label{eq:flow-Kir-lim}
\begin{cases}
\dot{u}(t)=\Pi_1 \Q \Pi_1 u(t), &t > 0,\\
u(0)=\Pi_1 \mathcal{P} x_0,&
\end{cases}
\end{equation}
then the following results hold.
\begin{enumerate}
	\item For any $x_0\in \Pi_1 \RR^m$,
		\begin{equation}\label{eq:flow-Kir-conv1}
		\lim\limits_{\e \to 0^+}\|u_\e(t) - u(t)\|_{\rL^1(\mathcal{G})} =0\quad \text{almost uniformly on }[0,\infty).
		\end{equation}
	\item If, additionally,  $\K$ is primitive then, for any $x_0\in \RR^m$, the convergence in \eqref{eq:flow-Kir-conv1} is almost uniform on $(0,\infty)$.	
	\item For any $x_0\in \rL^1(\mathcal{G})$,
		\begin{equation}\label{eq:flow-Kir-conv2}
		\lim\limits_{\e \to 0^+}\|\Pi_1 \sP u_\e(t) - u(t)\|_{\ell_1} =0\quad \text{almost uniformly on }[0,\infty).
		\end{equation}
\end{enumerate}
\end{theorem}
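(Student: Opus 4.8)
The plan is to recognise \eqref{eq:acp} -- \eqref{eq:A1e-Kir} as a \emph{singular perturbation} in which the $\tfrac1\e$-rescaled transport plays the role of a fast, equilibrating dynamics while the $\e\Q$-term in the boundary condition is a slow perturbation. Accordingly, I would prove the three convergences through a Trotter--Kato--Kurtz type argument: isolate the fast semigroup, identify its equilibrium projection, compute the reduced slow generator on the range of that projection, and read off the three statements as restrictions of one limit.

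First I would make the flow semigroup explicit by the method of characteristics. Since $c\equiv1$ and the equation is $\partial_t u=\tfrac1\e\,\partial_s u$, material moves rigidly from $s=1$ toward $s=0$ and crosses an edge in time $\e$; at each passage through the vertices the profile picks up a factor $M_\e:=\K+\e\Q$ from the boundary condition $f(1)=M_\e f(0)$. Hence, up to a boundary-layer term accounting for the non-integer part of $t/\e$, the action of $T_\e(t)$ on an edge-wise constant datum reduces to multiplication by $M_\e^{\lfloor t/\e\rfloor}$. This collapses the infinite-dimensional problem on $\rL^1(\sG)$ to the finite-dimensional study of the perturbed powers $(\K+\e\Q)^{\lfloor t/\e\rfloor}$, and it also explains the averaging projection $\sP$: only the edge-wise constant part of the datum survives the fast transport, while the mean-zero part is carried off and smoothed out as $\e\to0$.

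The analytic core is then the matrix limit
$$\lim_{\e\to0}\bigl(\K+\e\Q\bigr)^{\lfloor t/\e\rfloor}=e^{\,t\,\Pi_1\Q\Pi_1}\,\Pi_1,$$
uniformly for $t$ in compact sets. To establish it I would exploit the spectral structure of $\K$: contractivity together with semisimplicity of $\lambda=1$ yields $\K=\Pi_1+(\K-\Pi_1)$ with the spectral radius of $\K-\Pi_1$ at most $1$, and first-order eigenprojection perturbation shows that the dominant eigendata of $M_\e$ move by $O(\e)$ with the leading correction governed precisely by $\Pi_1\Q\Pi_1$ (indeed $\Pi_1\Q\Pi_1=(e_l\cdot\Q e_r)\,\Pi_1$). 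Raising to the power $\lfloor t/\e\rfloor$ converts the $O(\e)$ eigenvalue shift into the exponential $e^{t\Pi_1\Q\Pi_1}$, while the subdominant block contributes powers that either decay or, in the imprimitive case, merely rotate on the peripheral spectrum. Setting the limit projection $P:=\Pi_1\sP$ and the reduced generator $\Pi_1\Q\Pi_1$ on $\ran P$, I would assemble the statements: \eqref{eq:flow-Kir-conv1} for $x_0\in\Pi_1\RR^m=\ran P$ (where no relaxation is needed, so convergence is almost uniform up to $t=0$), \eqref{eq:flow-Kir-conv2} for general $x_0$ after pre-applying $\Pi_1\sP$, and the sharpened assertion (2) under primitivity.

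The main obstacle is exactly this peripheral-spectrum issue in the power asymptotics. When $\K$ is merely contractive and not primitive, $\K-\Pi_1$ may carry eigenvalues on the unit circle, so $(\K+\e\Q)^{\lfloor t/\e\rfloor}$ oscillates near $t=0$ and the fast dynamics does not relax to $\Pi_1$ instantaneously; this is precisely why the convergence in \eqref{eq:flow-Kir-conv1} is only almost uniform on $[0,\infty)$ even for data already in $\ran\Pi_1$, and why primitivity is needed in (2) to upgrade to almost uniform convergence on $(0,\infty)$. Controlling this boundary layer — separating genuine decay of the subdominant block from its peripheral rotation, and showing that $\sP$ suppresses the non-constant modes at the right rate — is the delicate part; the remaining passage from the matrix limit back to convergence of the $\rL^1(\sG)$-semigroups is then routine Trotter--Kato--Kurtz bookkeeping.
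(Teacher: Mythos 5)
Your proposal is correct and follows essentially the same route as the paper's source: the theorem is not proved here but cited from \cite[Cor.~1\&3, Thm.~4.1]{BP18}, whose argument is precisely your reduction --- characteristics turning $T_\e(t)$ on edge-wise constant data into the matrix powers $(\K+\e \Q)^{\lfloor t/\e\rfloor}$, followed by the degenerate Euler--Hille limit $e^{t\,\Pi_1\Q\Pi_1}\Pi_1$ under contractivity of $\K$ and semisimplicity of $\lambda=1$, with primitivity exactly controlling the peripheral spectrum in assertion~(2) and the averaging $\sP$ handling general data in~(3). One small wording slip: in~(3) the lumping $\Pi_1\sP$ is applied to the solution $u_\e(t)$ rather than to the initial datum, which is precisely how the integration over each edge suppresses the non-constant and oscillating modes you worry about near $t=0$.
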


The three types of convergence in \autoref{flow-Kir-conv} show the relation between the micro-model and its aggregated counterpart defined in \eqref{eq:flow-Kir-lim}. The lack of convergence for any $x_0\in \rL^1(\mathcal{G})$, see the counterexample in \cite[Sec.~3]{BF:17}, indicates that the micro-description is richer in information than the macro-model which agrees with intuition. Simultaneously in both approaches, the macro-parameters of the system, namely a total masses at the moment $t\geq 0$, are comparable according to \eqref{eq:flow-Kir-conv2}. 

Using the interpretation of  the projection $\Pi_1$ and condition \eqref{eq:flow-Kir-conv1}, we conclude that the solution to \eqref{eq:flow-Kir-lim} does not approximate the mass at each edge, but rather the total mass concentrated on the terminal strong components $\sG_t$ of the graph $\sG$. If there are, say, $\ell$ such strong components, then the limiting system of ordinary differential equations consists of $\ell$ differential equations describing the evolution of the material trapped in each terminal component. This  goes in line with the long time behaviour of the system given in \autoref{thm:flow-asy}. In other words, the aggregation method presented in \autoref{flow-Kir-conv} yields a macro-model approximating the long term dynamics of the given micro-model. 

Note, finally, that the gene evolution in the aggregated model \eqref{eq:flow-Kir-lim} is embedded twofold: by the Perron eigenvector $e_r$, see \eqref{eq:proj}, giving the long term profile of the flow and by the matrix of mutations $\Q$ which influences the time evolution of the total mass. For details see \cite[Exam.~6]{BP18}.

\subsection{A synaptic transmission model}\label{Robin}

Using the mathematical approach from \cite{Bob12,BFN:16}, we now describe the process of nervous system response to stimulus by modelling a transmission of an information among neurons through a chemical substance called neurotransmitter. The neurotransmitters are stored in the synaptic vesicles situated in axon terminal which, for the need of this model, we subdivide into certain number of compartments called \emph{pools}.
In this approach we assume that the storage in the vesicles and its release to another pool is described by a diffusion in the cytoplasm and its transfer through a semi-permeable membrane. For the sake of simplicity, the spatial distribution of each synaptic pool is represented by an interval $[0,1]$. Hence, the function $u_i(x,t)$ describes the concentration of vesicles in $i$-th pool in position $x\in[0,1]$ at time $t\geq 0$. We follow the concept of Aristizabal and Glavinovi\v c who initiated this considerations in \cite{AG:04}. The dynamics of the densities $u_i$ was modelled in the tree pool case - with large, small, and immediately available pools - similarly to voltages across the capacitors in an electric circuit. This allowed obtaining the rates of transfer between adjacent pools. 

Let us consider the connections between $m$ synaptic pools using a simple, strongly connected and oriented metric graph $\sG$. Let $l_i, l_{ij}$ (resp. $r_i, r_{ij}$) be the rates at which the substance leaves $\me_i$ by vertex $\me_i(1)$ (resp. by $\me_i(0)$) or enters $\me_j$ from $\me_i(1)$ (resp. from $\me_i(0)$). Clearly, all the rates among adjacent edges are positive. The weighted outgoing adjacency matrix $\B_w^-=(b_{ij}^-)$ and the outgoing degree matrix $\D_w^-=(d_{ij}^-)$ of the line graph are given by
\begin{equation}\label{eq:A&D}
b_{ij}^-=l_{ij}+r_{ij},\qquad 
d_{ij}^-=\begin{cases}
    r_i+l_i,&\text{for }i=j,\\ 
   0,   & \text{otherwise}.
\end{cases}
\end{equation}
By Fick's law we obtain vertex conditions of the form
\begin{equation}\label{eq:D_bound1}
\begin{pmatrix} {f'(0)}\\{f'(1)} \end{pmatrix} = \mathbb{K}\begin{pmatrix} {f(0)}\\{f(1)} \end{pmatrix}\quad\text{with}\quad
\K=\begin{pmatrix} \K^{00}&\K^{01}\\ \K^{10}&\K^{11} \end{pmatrix}
\end{equation}
where the matrices 
$\K^{pq}=\left(k_{ij}^{pq}\right) \in M_m(\RR)$, $p,q=0,1$, are defined by
\begin{equation}\label{eq:K_def2}
k^{0q}_{ij}:=\begin{cases}
    -r_{i}&\text{if } i=j,\, q=0,\\
    r_{ij}&\text{if } \me_i(0)=\me_j(q), \, q=0,1, \\
  0   & \text{otherwise},
\end{cases}
\quad
k^{1q}_{ij}:=\begin{cases}
l_{i}&\text{if } i= j,\, q=1,\\
-l_{ij}&\text{if } \me_i(1)=\me_j(q), \, q=0,1,\\
0&\text{otherwise.}\
\end{cases}
\end{equation}
For the details of this construction we refer to \cite[Exam.~3.1]{BFN:16}, with the restriction that in this paper a reverse parametrisation of the interval is considered. 

We can thus rewrite  the model in terms of the Cauchy problem \eqref{eq:acp} -- \eqref{eq:D_operator} with 
\begin{equation}\label{eq:D_operator}
A:= \frac{d^2}{ds^2},\quad
D\left(A\right)=\left\{f\in\rW^{2,p}(\sG) \mid f \text{ satisfies } \eqref{eq:D_bound1}\right\}.
\end{equation}
The existence and uniqueness of the solution of this problem follows directly from \autoref{cor-mat-diff} by choosing $a(\p)\equiv \mathbf{1}\in\RR^m$, $k_0=0$, $k_1=2m$, 
\begin{eqnarray}\label{eq:D_bound2}
W_0:= \begin{pmatrix} -Id\\ 0\end{pmatrix}, \quad W_1:= \begin{pmatrix} 0\\ Id \end{pmatrix},\quad\text{and}\quad B:=\mathbb{K} \begin{pmatrix} Id\\ \psi\end{pmatrix},
\end{eqnarray}
where $\psi f (s) := f(1-s).$
Other practical properties such as positivity or conservation of mass in the process are presented below.

\begin{proposition}\label{prop:prop_e^D}
Let $\Tt$ be the solution semigroup in $\rL^1(\mathcal{G})$ of the problem \eqref{eq:acp} -- \eqref{eq:D_operator}.
Then the following results hold.
\begin{enumerate}
\abovedisplayshortskip=-\baselineskip 
	\item $\Tt$ is a positive semigroup.
	\item $\Tt$ is a Markov semigroup if and only if for any $i=1,\dots,m$
		\begin{equation}\label{eq:dif-contr}
		\sum_{k=1}^m l_{ik}=l_i\quad \text{and}\quad \sum_{k=1}^m r_{ik}=r_i.
		\end{equation}
	\item If $\K$ satisfies \eqref{eq:dif-contr} then $\Tt$ is an irreducible semigroup.
		\end{enumerate}
\begin{proof}
Assertion (i) follows from \eqref{eq:K_def2} and \cite[Cor.~2.6]{BFN:16a} while (ii) is stated in \cite[Exam.~3.1, eq. (3.48)]{BFN:16}. It remains to prove (iii). By (i) and  (ii), $\Tt$ is a positive, Markov semigroup. We first show that  $\Tt$ is also mean ergodic, for a definition see \cite[Def.~V.4.3]{EN:00}, which is for bounded $C_0$-semigroups by \cite[Thm.~V.4.5]{EN:00} equivalent to the condition 
\begin{equation}\label{eq:d_ker}
\fix \Tt=\text{ker} A \quad \text{separates}\quad \fix \Ttd= \text{ker}  A^* .
\end{equation} 
The irreducibility of $\Tt$ then follows  analogously as in the proof of \cite[Thm.~5.1]{KMS:07}.

Note that by \cite[Exer.~II.4.30(4)]{EN:00},  $A$ is resolvent compact so it has only a point spectrum.  Now, $\ker A$ consists of functions $f(x)=C_1x+C_2$, $C_1,C_2\in \RR^m$, satisfying the boundary condition \eqref{eq:D_bound1} which implies
\begin{equation}\label{eq:spA}
\mathbb{M}\begin{pmatrix}C_1\\ C_2\end{pmatrix}:=\begin{pmatrix}\I-\K^{01}&-\K^{00}-\K^{01}\\
\K^{11}-\I&\K^{10}+\K^{11}\end{pmatrix}\begin{pmatrix}C_1\\ C_2\end{pmatrix}=0.
\end{equation}
By \eqref{eq:dif-contr}, $(C_1,C_2)^\top=(\mathbf{0},\mathbf{1})$, where $\mathbf{0}=(0,\dots,0)^\top$ and $\mathbf{1}=(1,\dots,1)^\top$, fullfils \eqref{eq:spA}, therefore $\rank \mathbb{M}\leq 2m-1$. To show that in fact equality holds note that
$$\rank \mathbb{M}\geq\text{rank}\begin{pmatrix}-\K^{00}-\K^{01}\\
\K^{10}+\K^{11}\end{pmatrix}=2m-1.$$
Indeed, define
\begin{equation}\label{eq:K_def}
\sK^-=\K^{10}+\K^{11}-\left(\K^{00}+\K^{01}\right),
\end{equation} 
which by \eqref{eq:A&D} and \eqref{eq:K_matrix}, is an outgoing Kirchhoff matrix of the line graph of $\sG$. By \cite[Lem.~2.13]{Mug:14} the  algebraic multiplicity of $0$ in $\sigma(\sK^-)$ coincides with the number of connected components of $\sG$, so by strong connectedness of $\sG$, $\ker \sK^-=\text{lin}\left\{\mathbf{1}\right\}\cong\ker A$.

We now compute the dual operator to $(A,D(A))$ in $\rL^{\infty}(\mathcal{G})$ as
\begin{equation}\nonumber
 A^*  = \frac{d^2}{dx^2},\quad
D\left( A^* \right)=\left\{g\in \left(\rW^{2,1}(\sG)\right)^* \;\bigg|\; \begin{pmatrix} {g'(0)}\\{g'(1)} \end{pmatrix} = \mathbb{K}^* \begin{pmatrix} {g(0)}\\{g(1)}\end{pmatrix}\right\},
\end{equation}
with $\K^*$ defined in \cite[Sec.~3, eq. (3.2)]{BFN:16}. Since by \cite[Prop.~IV.2.18]{EN:00} the spectra of $A$ and $ A^* $ coincide, an analogous reasoning leads to the conclusion that 
$\ker A^*$ is one dimensional as well.
Note,  that for the dual problem instead of the outgoing Kirchhoff matix $\sK^-$ we choose the  incoming one: $\sK^+:=\left(\sK^-\right)^\top$.
It is now easy to see that  \eqref{eq:d_ker} holds.
\end{proof}
\end{proposition}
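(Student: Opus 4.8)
The plan is to establish the three assertions in turn, spending almost all of the effort on (iii). For the positivity statement (i), I would not use the graph structure at all but only the sign pattern of the boundary matrix $\K$: reading off \eqref{eq:K_def2}, the diagonal entries $-r_i$ of $\K^{00}$ and $l_i$ of $\K^{11}$ collect the outflow rates, while every genuine coupling $r_{ij},l_{ij}$ between adjacent edges enters off the diagonal with a nonnegative sign. This is exactly the configuration under which the positivity criterion of \cite[Cor.~2.6]{BFN:16a} applies, so I would simply check that its hypotheses are met.

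For (ii), recall that on $\rL^1(\sG)$ a Markov semigroup is precisely a positive, mass-conserving one, so given (i) it remains to characterise conservation of mass. Differentiating $t\mapsto\int_\sG T(t)f$ reduces this to the requirement $\int_\sG Af=0$ for all $f\in D(A)$, and since $\int_0^1 f_j''=f_j'(1)-f_j'(0)$ the identity reads $\mathbf 1^\top\bigl(f'(1)-f'(0)\bigr)=0$. Substituting the boundary relation \eqref{eq:D_bound1} to express $f'(0),f'(1)$ through $f(0),f(1)$ and demanding that the resulting identity hold for arbitrary admissible boundary values turns it into balance conditions on the entries of the blocks $\K^{pq}$, which are exactly the rate balances \eqref{eq:dif-contr}; conversely these balances force $\int_\sG Af=0$. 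This recovers \cite[Exam.~3.1, eq.~(3.48)]{BFN:16}, and positivity then makes the mass-conserving semigroup automatically contractive, hence Markov.

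The heart of the matter is (iii). Granting (i) and (ii), under \eqref{eq:dif-contr} the semigroup $\Tt$ is a positive contraction semigroup, so it is bounded, and I would derive irreducibility from mean ergodicity together with strict positivity of the fixed elements, in the spirit of \cite[Thm.~5.1]{KMS:07}. By \cite[Thm.~V.4.5]{EN:00} mean ergodicity of a bounded $C_0$-semigroup is equivalent to $\fix\Tt=\ker A$ separating $\fix\Ttd=\ker A^*$, so the task splits into computing these two fixed spaces. Since $A$ is resolvent compact \cite[Exer.~II.4.30(4)]{EN:00} its spectrum is pure point; the elements of $\ker A$ are then the affine functions $f(x)=C_1x+C_2$ obeying \eqref{eq:D_bound1}, and inserting this form collapses the boundary relation to a homogeneous linear system $\mathbb{M}\,(C_1,C_2)^\top=0$ for an explicit $2m\times 2m$ matrix $\mathbb{M}$ assembled from the blocks $\K^{pq}$. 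The vector $(C_1,C_2)=(\mathbf 0,\mathbf 1)$, which is the constant function, always solves this system under \eqref{eq:dif-contr}, so $\dim\ker A\ge1$.

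The decisive step is the reverse bound $\dim\ker A\le1$, and here the graph hypothesis must finally be used. My plan is to reduce the $2m\times 2m$ system $\mathbb{M}$ to the $m\times m$ outgoing Kirchhoff matrix $\sK^-=\K^{10}+\K^{11}-(\K^{00}+\K^{01})$ of the line graph introduced in \eqref{eq:K_matrix}, and then invoke \cite[Lem.~2.13]{Mug:14}: the algebraic multiplicity of $0$ in $\sigma(\sK^-)$ equals the number of connected components of $\sG$, so strong connectedness forces $\ker\sK^-=\mathrm{lin}\{\mathbf 1\}$ and hence $\dim\ker A=1$ with strictly positive generator. An entirely parallel computation, using that $A^*$ is the same differential operator with boundary matrix $\K^*$ and that its Kirchhoff matrix is the incoming one $\sK^+=(\sK^-)^\top$, shows $\ker A^*$ is likewise one-dimensional and spanned by a strictly positive functional. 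Since this functional pairs nontrivially with the strictly positive generator of $\ker A$, the fixed space separates, mean ergodicity follows, and strict positivity of both fixed elements upgrades it to irreducibility exactly as in \cite[Thm.~5.1]{KMS:07}. I expect the genuine obstacle to be precisely the rank bookkeeping in reducing $\mathbb{M}$ to $\sK^-$: converting the topological statement ``$\sG$ is strongly connected'' into the algebraic statement ``$\mathbb{M}$ has corank one'' is where all the structure is spent.
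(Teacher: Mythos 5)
Your proposal is correct and takes essentially the same route as the paper's own proof: positivity via \cite[Cor.~2.6]{BFN:16a}, the Markov characterisation \eqref{eq:dif-contr}, and for (iii) the identical chain through mean ergodicity of the bounded semigroup (\cite[Thm.~V.4.5]{EN:00}), the identification of $\ker A$ with affine functions $C_1x+C_2$ subject to a $2m\times 2m$ homogeneous system, its reduction to the outgoing Kirchhoff matrix $\sK^-=\K^{10}+\K^{11}-(\K^{00}+\K^{01})$ of the line graph, the multiplicity lemma \cite[Lem.~2.13]{Mug:14} combined with strong connectedness to get $\ker\sK^-=\mathrm{lin}\{\mathbf{1}\}$, the dual computation via $\sK^+=(\sK^-)^\top$, and the concluding appeal to \cite[Thm.~5.1]{KMS:07}. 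The only cosmetic difference is that for (ii) you sketch the mass-conservation computation directly where the paper simply cites \cite[Exam.~3.1, eq.~(3.48)]{BFN:16}, and you correctly flag the rank bookkeeping for $\mathbb{M}$ as the step carrying the real content, which is exactly where the paper spends its effort.
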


Using \autoref{thm:flow-asy} we show that also in the network diffusion process a long time behaviour lumps mass in the strong components of a graph. We obtain also a new type of information which relates the rate of the norm convergence with the  network structure. 
\begin{theorem}\label{thm:dif-conv}
Let $u(t)=T(t)x_0$ for $x_0\in \rL^1(\mathcal{G})$ be the semigroup solution of \eqref{eq:acp} -- \eqref{eq:D_operator}.
If the entries of $\K$ satisfy \eqref{eq:dif-contr} 
 then
\begin{equation}\label{eq:dif-conv}
		\lim\limits_{t \rightarrow \infty} \|T(t) - \Pi \| =0\quad \text{for all }t\geq 0,
		\end{equation}
where $\Pi$ is the strictly positive projection onto $\ker A$, the one-dimensional subspace spanned by $\mathbf{1}$.

Further, let $\lambda$ be the largest non-zero eigenvalue of $A$. Then for any $\e>0$ there exists $M>0$ such that 
\begin{equation}
\left\|T(t) - \Pi\right\|\leq Me^{(\e+\lambda)t} \quad \text{for all } t\geq 0.
\end{equation}
\end{theorem}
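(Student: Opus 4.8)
The plan is to read off the long-term behaviour directly from the spectrum of $A$, exploiting that $A$ generates an analytic semigroup with compact resolvent together with the positivity and irreducibility already established in \autoref{prop:prop_e^D}. First I would collect the structural facts. By \autoref{cor-mat-diff} the operator $A$ generates an analytic semigroup, and, as recorded in the proof of \autoref{prop:prop_e^D}, it has compact resolvent; hence $\sigma(A)=\sigma_p(A)$ consists of isolated eigenvalues of finite algebraic multiplicity without finite accumulation point, and $T(t)$ is compact for every $t>0$ (being immediately norm continuous with compact resolvent, cf.~\cite[Sect.~II.4]{EN:00}). In particular $\Tt$ is immediately norm continuous, so the spectral mapping theorem applies and the growth bound coincides with the spectral bound, $\omega_0=s(A)$, see \cite[Sect.~IV.3]{EN:00}. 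Since under \eqref{eq:dif-contr} the semigroup is positive and Markov, hence contractive on $\rL^1(\mathcal{G})$, we have $s(A)\le 0$; and because $\mathbf{1}\in\ker A$ we get $0\in\sigma(A)$, so that $s(A)=0$.

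Next I would invoke the Perron--Frobenius theory for positive irreducible semigroups. As $\Tt$ is irreducible by \autoref{prop:prop_e^D}(iii) and $0=s(A)$ is a pole of the resolvent (the resolvent being compact), the boundary eigenvalue $0$ is algebraically simple, that is, a first-order pole, and the associated spectral projection $\Pi$ is strictly positive; moreover $\ker A=\ran\Pi=\mathrm{lin}\{\mathbf{1}\}$ is one dimensional, as already shown in \autoref{prop:prop_e^D}. It then remains to separate $0$ from the rest of the spectrum. The peripheral spectrum $\sigma(A)\cap i\RR$ is, by the cyclicity theorem for irreducible positive semigroups (cf.~\cite{EN:00}), of the form $i\alpha\ZZ$ for some $\alpha\ge 0$; since $T(t)$ is compact this set is finite, which forces $\alpha=0$ and hence $\sigma(A)\cap i\RR=\{0\}$. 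Consequently $\lambda:=\max\{\operatorname{Re}\mu\mid\mu\in\sigma(A)\setminus\{0\}\}$ is attained and strictly negative, which is precisely the largest non-zero eigenvalue appearing in the statement.

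Finally I would assemble the estimate from the spectral decomposition $\rL^1(\mathcal{G})=\ran\Pi\oplus\ker\Pi$ induced by the first-order pole at $0$; both summands are $T(t)$- and $A$-invariant. On $\ran\Pi=\ker A$ the generator vanishes, so $T(t)$ acts as the identity and $T(t)\Pi=\Pi$; therefore $T(t)-\Pi=T(t)(I-\Pi)$. On the complementary space the part of $A$ has spectrum $\sigma(A)\setminus\{0\}$, hence spectral bound $\lambda<0$, and since the restricted semigroup is again immediately compact its growth bound equals $\lambda$. Thus for every $\e>0$ there is $M>0$ with $\|T(t)(I-\Pi)\|\le Me^{(\lambda+\e)t}$ for all $t\ge 0$, which yields at once the rate estimate and, as $\lambda+\e<0$ for small $\e$, the uniform convergence $\|T(t)-\Pi\|\to 0$.

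The \emph{main obstacle} is the second step: establishing that $0$ is a first-order pole carrying a strictly positive spectral projection and, above all, that no further spectrum touches the imaginary axis, so that a genuine spectral gap $\lambda<0$ exists. This is exactly where the positivity and irreducibility from \autoref{prop:prop_e^D} are indispensable; geometric simplicity of $\ker A$ alone would rule out neither a higher-order pole nor additional peripheral eigenvalues, either of which would destroy uniform convergence in the operator norm. Once the gap is secured, the passage from spectral bound to growth bound is routine for the norm continuous semigroup at hand.
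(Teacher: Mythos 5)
Your proposal is correct and follows essentially the same route as the paper: the paper likewise assembles positivity, irreducibility, contractivity, analyticity and compact resolvent from \autoref{cor-mat-diff} and \autoref{prop:prop_e^D}, deduces eventual norm continuity and eventual compactness, and then concludes by citing \cite[Cor.~V.3.2--V.3.3]{EN:00} and \cite[Prop.~14.12]{BKFR:17}. The only difference is that you unpack those citations inline (first-order pole at $0$ with strictly positive rank-one projection, cyclicity plus eventual compactness forcing $\sigma(A)\cap i\RR=\{0\}$, and the spectral gap giving the growth bound $\lambda$), which is precisely the content of the quoted results.
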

\begin{proof}
By \autoref{cor-mat-diff} and \autoref{prop:prop_e^D}, $\Tt$ is a positive, irreducible, analytic semigroup of contractions. 
From the proof of \autoref{prop:prop_e^D} it further follows that $A$ is resolvent compact, $s(A)=0\in\sigma(A)$,  and $\ker A$ is one-dimensional, spanned by $\mathbf{1}$. Therefore, $\Tt$  is also eventually norm continuous (cf.~\cite[Ex.~II.4.21]{EN:00}) and  eventually compact (cf.~\cite[Lem.~II.4.28]{EN:00}). The first assertion now follows by  \cite[Cor.~V.3.3]{EN:00} and \cite[Prop.~14.12]{BKFR:17},  while the second is a consequence of \cite[Cor.~V.3.2]{EN:00}. 
\end{proof}

In analogy to the considerations in \autoref{sec:applications}\ref{Kirchhoff}, we identify now two time scales for the described process of information transmission. Diffusion in the synaptic pools occurs on a millisecond time scale. Therefore, to model synaptic depression in longer time interval, such as a second, we can consider fast diffusion with slow rates of change between synaptic pools. For $\e>0$ consider the family of operators 
\begin{equation}\label{eq:A2e_R}
A^{\e} = \frac{1}{\e}\frac{d^2}{dx^2},\quad
D\left(A^{\e}\right)=\left\{f\in\rW^{2,1}(\sG) \;\bigg|\; \begin{pmatrix} {f'(0)}\\{f'(1)} \end{pmatrix} = \e \mathbb{K}\begin{pmatrix} {f(0)}\\{f(1)} \end{pmatrix}\right\}.
\end{equation}

\begin{theorem}
For any $\e>0$, let $u_\e(t) = T_{\e}(t)x_0$, $x_0 \in \rL^1(\mathcal{G})$, be  the semigroup solution to \eqref{eq:acp} -- \eqref{eq:A2e_R}. If $u(t)=T(t)u(0)$ is a matrix semigroup solution in $\RR^m$ of the problem
\begin{equation}\label{eq:dif-lim}
\begin{cases}
\dot{u}(t)=\sK^- u(t), &t > 0,\\
u(0)=\mathcal{P} x_0,&
\end{cases}
\end{equation} 
for $\sK^-$ and $\sP$ defined in \eqref{eq:K_def} and \eqref{eq:proj}, respectively, then for any $x_0\in \RR^m$
		\begin{equation}\label{eq:dif-conv1}
		\lim\limits_{\e \to 0^+}\left\|u_\e(t) - u(t)\right\|_{\rL^1(\mathcal{G})} =0\quad \text{almost uniformly on }[0,\infty).
		\end{equation}
Additionally, for any $x_0\in \rL^1(\mathcal{G})$, the convergence in \eqref{eq:dif-conv1} holds almost uniformly on $(0,\infty)$.
\end{theorem}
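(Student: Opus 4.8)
The plan is to read \eqref{eq:A2e_R} as a singular (fast-diffusion) perturbation and to deduce the convergence from a Trotter--Kato--type theorem. First, the well-posedness of \eqref{eq:acp} -- \eqref{eq:A2e_R} is granted by \autoref{cor-mat-diff} for every $\e>0$: with $a(\p)\equiv\mathbf 1$, $k_0=0$, $k_1=2m$ and $B=\e\K\binom{Id}{\psi}$, the determinant condition there is independent of $B$, hence of $\e$, and coincides with the one already verified for \eqref{eq:D_operator}. The underlying fast generator is the edge-wise decoupled Neumann Laplacian $\tfrac{d^2}{dx^2}$ with $f'(0)=f'(1)=0$, obtained from $A^\e$ by formally setting $\e=0$ in the boundary condition; its semigroup is a contraction on $\rL^1(\mathcal{G})$ and equilibrates each edge to its average, so $e^{(t/\e)\,d^2/dx^2}\to\sP$ strongly as $\e\to0^+$, where $\sP$ is the projection from \eqref{eq:proj} onto the edge-wise constants $\RR^m$. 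The effective slow dynamics on this reduced space will turn out to be exactly $\sK^-$, and the whole statement then reduces to two ingredients: a uniform stability bound for $(T_\e(t))$ and the convergence of resolvents $R(\lambda,A^\e)\to R(\lambda,\sK^-)\sP$.

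The core computation is this resolvent convergence, which I would carry out by solving the boundary value problem $(\lambda-A^\e)f_\e=g$ on each edge and expanding $f_\e=f_0+\e f_1+O(\e^2)$ with $f_0\in\RR^m$. Since $(\lambda-A^\e)f=g$ reads $f''=\e(\lambda f-g)$, the order-$\e^0$ equation forces $f_0''=0$, while the boundary condition $\binom{f'(0)}{f'(1)}=\e\K\binom{f(0)}{f(1)}$ kills the slopes at this order, so $f_0$ is constant on every edge, i.e.\ $f_0\in\RR^m$. At order $\e^1$ one has $f_1''=\lambda f_0-g$; integrating over $[0,1]$ gives $f_1'(1)-f_1'(0)=\lambda f_0-\sP g$, whereas the order-$\e^1$ part of the boundary condition yields $f_1'(0)=(\K^{00}+\K^{01})f_0$ and $f_1'(1)=(\K^{10}+\K^{11})f_0$, so that $f_1'(1)-f_1'(0)=\sK^- f_0$ by \eqref{eq:K_def}. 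Equating the two expressions gives $(\lambda-\sK^-)f_0=\sP g$, i.e.\ $f_0=R(\lambda,\sK^-)\sP g$, which identifies the limit resolvent and thereby the generator of the reduced problem \eqref{eq:dif-lim}. The real task is then to promote this formal expansion to a genuine limit in $\rL^1(\mathcal{G})$, controlling the boundary-layer correction and the $O(\e^2)$ remainder via the explicit Green's function of the scalar problem; this is where the technical work sits.

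With resolvent convergence in hand, I would invoke a singular-perturbation version of the Trotter--Kato theorem, as used for the transport analogue in \autoref{flow-Kir-conv} and in \cite{BP18,BFN:16,Bob12}, to pass from resolvents to semigroups. The outstanding hypothesis is the uniform stability $\|T_\e(t)\|\le M$ for small $\e$ and $t$ in compacts; I would obtain it from uniform sectoriality estimates for $A^\e$ (the semigroups are analytic by \autoref{cor-mat-diff}, and the boundary term $\e\K$ is an $\e$-controlled perturbation of the Neumann resolvent), and this is the main analytic obstacle, precisely because $\e\K$ sits in the boundary condition rather than being a bounded perturbation of the generator. For $x_0\in\RR^m=\ran\sP$ one has $\sP x_0=x_0$, so the data match at $t=0$ and Trotter--Kato delivers convergence almost uniformly on $[0,\infty)$. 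For general $x_0\in\rL^1(\mathcal{G})$ the profiles $u_\e$ and $u$ differ at $t=0$ by $(I-\sP)x_0$, producing an initial layer; using the uniform analyticity of $(T_\e(t))$ one shows $T_\e(t)(I-\sP)x_0\to0$ locally uniformly on $(0,\infty)$, so that the convergence persists after any positive time, i.e.\ almost uniformly on $(0,\infty)$, which completes the argument.
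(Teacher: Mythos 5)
Your route is genuinely different from the paper's. The paper proves this theorem in one stroke by citing \cite[Thm.~3.2]{BFN:16}, which already contains the full singular-perturbation analysis: it gives $\lim_{\e\to0^+}\|u_\e(t)-u(t)-w(t/\e)\|_{\rL^1(\mathcal{G})}=0$ almost uniformly on $[0,\infty)$ with an \emph{explicit} initial-layer term $w(\tau)=\sum_{n\ge1}e^{-(n\pi)^2\tau}a_n\cos n\pi x$, after which both assertions of the theorem are read off from $w$: the coefficients $a_n$ come from the non-constant part of the data, so $w\equiv 0$ for $x_0\in\RR^m$ (convergence on $[0,\infty)$), while for general $x_0$ the layer dies off for every $t>0$ (convergence on $(0,\infty)$). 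You instead propose a degenerate Trotter--Kato (Sova--Kurtz) scheme, and your central computation is correct and is the real content of the limit identification: expanding $f_\e=f_0+\e f_1+O(\e^2)$ in $(\lambda-A^\e)f_\e=g$, the order-$\e^0$ boundary condition forces $f_0\in\RR^m$, and the solvability condition $f_1'(1)-f_1'(0)=\lambda f_0-\sP g$ matched against the order-$\e^1$ boundary data $f_1'(0)=(\K^{00}+\K^{01})f_0$, $f_1'(1)=(\K^{10}+\K^{11})f_0$ yields exactly $(\lambda-\sK^-)f_0=\sP g$ via \eqref{eq:K_def}. This correctly recovers both $\sK^-$ and $\sP$ in \eqref{eq:dif-lim}, and your $[0,\infty)$ versus $(0,\infty)$ dichotomy ($x_0\in\ran\sP$ versus an initial layer $(I-\sP)x_0$) is the right mechanism; this is essentially the program of \cite{Bob12,BP18}, so the approach is viable, not merely plausible.

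That said, as written the proposal is a scheme with two substantive steps flagged but not executed, and they are precisely where the proof lives. First, the uniform stability bound $\|T_\e(t)\|\le M$: your suggestion of uniform sectoriality estimates is the hard way around; note instead that the Markov condition \eqref{eq:dif-contr} is homogeneous in the rates, so the rescaled boundary matrix $\e\K$ satisfies it whenever $\K$ does, whence by \autoref{prop:prop_e^D}(ii) each $T_\e(t)$ is a positive contraction \emph{uniformly in} $\e$ (the time rescaling by $1/\e$ does not affect contractivity) --- this closes the gap cleanly in the biologically relevant regime, which is the setting \cite[Thm.~3.2]{BFN:16} operates in. Second, promoting the formal expansion to genuine resolvent convergence in $\rL^1(\mathcal{G})$, with control of the boundary layer and the $O(\e^2)$ remainder, is not routine, because the perturbation $\e\K$ sits in the boundary condition and is not a bounded perturbation of the generator (so your Duhamel/Green's-function step needs Greiner-type boundary estimates); likewise your claim $T_\e(t)(I-\sP)x_0\to0$ locally uniformly on $(0,\infty)$ is exactly the decay of the layer and needs a quantitative comparison with the decoupled Neumann semigroup. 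The paper's citation buys all of this at once, together with the explicit rate $e^{-\pi^2 t/\e}$ of the layer; your route, if completed, would buy a more modular argument (resolvent computation plus an abstract convergence theorem) that does not depend on the explicit eigenfunction expansion. So: correct strategy, correct limit identification, but two acknowledged gaps that must be filled --- most economically by quoting \cite[Thm.~3.2]{BFN:16} or Kurtz's theorem as in \cite{Bob12} rather than re-deriving the estimates.
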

\begin{proof}
\cite[Thm.~3.2]{BFN:16} states that 
\begin{equation}
\lim\limits_{\e \to 0^+}\left\|u_\e(t) - u(t)-w\left(\frac{t}{\e}\right)\right\|_{\rL^1(\mathcal{G})} =0\quad \text{almost uniformly on }[0,\infty),
\end{equation}
where $u(t)$ is defined in \eqref{eq:dif-lim} and $w(\tau)$ oscillates according to a formulae
\begin{equation*}
w(\tau)=\sum_{n=1}^{\infty}e^{-(n\pi)^2\tau}a_n\cos{n\pi x},
\end{equation*}
where $a_n\in \RR$ is a parameter independent of $\tau$, for details see \cite[Eq.~(3.30)--(3.32)]{BFN:16}. The convergence results follow from the definition of $w$.
\end{proof}
Unlike in \autoref{flow-Kir-conv}, except for $t=0$, the macro-process defined in \eqref{eq:dif-lim} gives a good approximation of the micro-model.  Note, however, that the mass in the limit system is lumped by operator $\sP$ at each edge of the graph and only by considering a long time behaviour of the aggregated model \eqref{eq:dif-lim} we obtain the dynamics concentrated in the strong components like in \autoref{thm:dif-conv}. Formaly, define a projection $\Pi_0\colon  \rL^1(\mathcal{G}) \rightarrow  \rL^1(\mathcal{G})$, $\Pi_0 u:= \left(e\cdot u\right)\mathbf{1}$, where $e$ is the left eigenvector of $\sK^-$ associated with $\lambda=0$ chosen in the way that $e \cdot \mathbf{1}=1$. Now, $\Pi=\Pi_0\sP$. We can conclude that acceleration of a process of transmission distributes the vesicles uniformly in synaptic pools, which goes in line with intuition, since a slow rate of exchange between the pools (edges) traps the substance in them. Only by considering  a  sufficiently long time interval we obtain a uniform distribution in the all tree interconnected synaptic pools. We can drag the conclusion that, when the stimulus is sufficiently strong and repeats frequently  enough, then the response to it can decrease in time since there are no neurotransmitters in the so-called immediately available pool to serve it. The constructed model therefore reflects  a known biological phenomena called the habituation.

\section{Conclusion}
We have presented a short survey giving  some new insights of semigroup methods to the study of dynamical processes on metric graphs in a Banach space setting. In our approach, we do not treat boundary conditions in the junctions locally, but rather use  graph matrices to incorporate the structure of the whole graph. In this way, we are able to deduce certain qualitative properties of the solutions from the graph properties. The presented approach has a wide range of applications.

\end{document}